\providecommand\@dotsep{5}
\def\listtodoname{List of Todos}
\def\listoftodos{\@starttoc{tdo}\listtodoname}
\numberwithin{equation}{section}
\newtheorem{theorem}{Theorem}[section]
\newtheorem{proposition}[theorem]{Proposition}
\newtheorem{lemma}[theorem]{Lemma}
\newtheorem{corollary}[theorem]{Corollary}
\newtheorem{step}{Step}
\newcommand\R{\mathbb R}
\newcommand\N{\mathbb N}
\begin{document}

\title[Fractional elliptic problems on exterior domains]
{Existence of solutions for a class of fractional elliptic problems on exterior domains}

\author{Claudianor O. Alves}
\author{Giovanni Molica Bisci}
\author{C\'esar E. Torres Ledesma}

\address[Claudianor O. Alves]{\newline\indent Unidade Acad\^emica de Matem\'atica
\newline\indent
Universidade Federal de Campina Grande,
\newline\indent
58429-970, Campina Grande - PB - Brazil}
\email{\href{mailto:coalves@mat.ufcg.edu.br}{coalves@mat.ufcg.edu.br}}

\address[Giovanni Molica Bisci]{\newline\indent Dipartimento P.A.U.
	\newline\indent
Universit\'a degli Studi Mediterranea di Reggio Calabria,
	\newline\indent
Salita Melissari - Feo di Vito, 89100 Reggio Calabria, Italy}
\email{\href{mailto:gmolica@unirc.it}{gmolica@unirc.it}}

\address[C\'esar E. Torres Ledesma]
{\newline\indent Departamento de Matem\'aticas
\newline\indent
Universidad Nacional de Trujillo
\newline\indent
Av. Juan Pablo II s/n. Trujillo-Per\'u}
\email{\href{ctl\_576@yahoo.es}{ctl\_576@yahoo.es}}

\pretolerance10000


\begin{abstract}
\noindent
This work concerns with the existence of solutions for the following class of nonlocal elliptic problems
\begin{equation*}\label{00}
\left\{
\begin{array}{l}
(-\Delta)^{s}u + u = |u|^{p-2}u\;\;\mbox{in $\Omega$},\\
u \geq 0 \quad \mbox{in} \quad \Omega \quad \mbox{and} \quad u \not\equiv 0, \\
u=0 \quad \mathbb{R}^N \setminus \Omega,
\end{array}
\right.
\end{equation*}
involving the fractional Laplacian operator $(-\Delta)^{s}$,
where $s\in (0,1)$, $N> 2s$, $\Omega \subset \R^N$ is an exterior domain with (non-empty) smooth boundary $\partial \Omega$ and $p\in (2, 2_{s}^{*})$. The main technical approach is based on variational and topological methods. The variational analysis that we perform in this paper dealing with exterior domains is quite general and may be suitable for other goals too.
\end{abstract}

\thanks{C.O. Alves was partially supported by CNPq/Brazil 304804/2017-7 and C.E. Torres Ledesma was partially supported by INC Matem\'atica  88887.136371/2017. }
\subjclass[2010]{Primary 35J60; Secondary 35C20, 35B33, 49J45.}
\keywords{}

\maketitle

\section{Introduction}

In this paper we study the existence of a solution for the following fractional elliptic problem
$$
\left\{
\begin{array}{l}
(-\Delta)^{s}u + u = |u|^{p-2}u\;\;\mbox{in $\Omega$}\\
u \geq 0 \quad \mbox{in} \quad \Omega \quad \mbox{and} \quad u \not\equiv 0, \\
u=0 \quad \mathbb{R}^N \setminus \Omega,
\end{array}
\right.
\leqno{(P)}
$$
where $s\in (0,1)$, $N> 2s$, $\Omega \subset \R^N$ is an exterior domain, i.e. an unbounded domain with smooth boundary $\partial \Omega \neq \emptyset$ such that $\R^N \setminus \Omega$ is bounded, $p\in (2, 2_{s}^{*})$, where $2_s^* = \frac{2N}{N-2s}$ is the fractional critical Sobolev exponent and $(-\Delta)^s$ is the classical fractional Laplace operator.

When $s\nearrow 1^-$, problem ($P$) reduces to the following elliptic problem
\begin{equation}\label{I01}
\left\{
\begin{array}{l}
-\Delta u + u = |u|^{p-2}u\;\;\mbox{in $\Omega$}\\
u= 0 \quad \mbox{on} \quad \partial \Omega,
\end{array}
\right.
\end{equation}
with $p\in (2, 2^*)$ and $2^* = \frac{2N}{N-2}$. This problem was studied by Benci and Cerami in \cite{VBGC}, and they proved that (\ref{I01}) does not have a ground state solution, which becomes a difficulty in dealing with the problem. The authors analyzed the behavior of Palais-Smale sequences and showed a precise estimate of the energy levels where the Palais-Smale condition fails, which made possible to show that the problem (\ref{I01}) has at least one positive solution, for $\R^N \setminus \Omega$ small enough. A key point in the approach explored in \cite{VBGC} is the existence and uniqueness, up to a translation, of a positive solution  $\Psi$ of the limit problem associated with (\ref{I01}) given by
\begin{equation}\label{DL}
\left\{
\begin{array}{l}
-\Delta u + u = |u|^{p-2}u \quad \mbox{in} \,\,\, \R^N \\
u \in H^{1}(\mathbb{R}^N).
\end{array}
\right.
\end{equation}
Moreover, the fact that $\Psi$ is  radially symmetric about the origin, monotonically decreasing in $|x|$, and that has an exponential decay apply important role in some estimates.
For related problems involving exterior domain we cite Alves and Freitas  \cite{CALF}, Bahri and Lions \cite{bah}, Cerami and Passaseo \cite{BP1}, Citti \cite{citti},  Clapp and Salazar  \cite{clapp}, Coffman and Marcus \cite{coffman}, Li and Zheng \cite{li}, Maia and Pellacci \cite{maia}, and their references.

Recently, the case $s \in (0,1)$ has received a special attention, because involves the fractional Laplacian operator $(-\Delta)^{s}$, which arises in a quite natural way in many different contexts, such as, among the others, the thin obstacle problem, optimization, finance, phase transitions, stratified materials, anomalous diffusion, crystal dislocation, soft thin films, semipermeable membranes, flame propagation, conservation laws, ultra-relativistic limits of quantum mechanics, quasi-geostrophic flows, multiple scattering, minimal surfaces, materials science and water waves, for more detail see \cite{Bucurb, EDNGPEV, Dipierrob, Molicab, CP}.

The reader can find in the literature very interesting papers whose the existence of solution has been established for problems like
\begin{equation}\label{I02*}
(-\Delta)^{s}u + V(x)u = f(x,u)\;\;\mbox{in}\;\; \mathbb{R}^{N},
\end{equation}
where $V$ and $f$ verify suitable conditions, see for example Alves and Miyagaki \cite{AlvesMIyagaki1}, Alves, de Lima and N\'obrega \cite{Alves-delima-Nobrega}, Autuori and Pucci \cite{Autuori}, Felmer, Quaas and Tan  \cite{PFAQJT},  Cheng \cite{MC}, Secchi \cite{SS-1},  D\'avila, del Pino and Wei \cite{Davila}, Dipierro, Patalucci and Valdinoci \cite{SDGPEV}, Fall, Mahmoudi and Valdinoci \cite{Moustapha}, Molica Bisci and R\u{a}dulescu \cite{MOLRAD}, Servadei and Valdinoci \cite{RSEV,RSEV1}, Shang and Zhang \cite{ShangZhang, Shang}, Caponi and Pucci \cite{Caponi}, Fiscella, Pucci and Saldi \cite{Fiscella} and references therein. Here, we would like point out that in Frank and Lenzmann \cite{RFEL} and Frank, Lenzmann and Silvestre \cite{RFELLS} the existence and uniqueness (up to symmetries) of positive ground state solution $Q$ was proved for the problem
\begin{equation}\label{infinito}
(-\Delta)^{s}u + u = |u|^{p-2}u\;\;\mbox{in}\;\; \mathbb{R}^{N},
\end{equation}
for every $p\in (2, 2_{s}^{*})$. Moreover, $Q$ is  radially symmetric about the origin and monotonically decreasing in $|x|$. On the contrary of the classical elliptic case, for $s \in (0,1)$ any information is available about the exponential decay of $Q$.

Since we did not find in the literature any paper dealing with the existence of non negative solutions for problem ($P$) in exterior domains, motivated by the ideas found in Benci and Cerami \cite{VBGC},  we intend in the present paper to prove that ($P$) has a nontrivial weak solution. As above mentioned, in \cite{VBGC}, Benci and Cerami used the fact that positive ground state solution $\Psi$ of (\ref{DL}) has an exponential decaying to prove some estimates, however for fractional Laplacian this type of behavior was not established yet, which brings some technical difficulty to prove the existence of solution for ($P$). However, we were able to proof that the exponential decay infinity is not necessary to establish the existence of a non negative solution for $(P)$.

Our main result is the following:

\begin{theorem}\label{teo 1}
	There exists $\rho^* >0$ such that if $\R^N \setminus \Omega \subset B_{\rho}(0)$ and $\rho < \rho^*$, then problem  $(P)$ has at least one non negative solution.
\end{theorem}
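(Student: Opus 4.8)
\medskip\noindent\emph{Proof strategy.}
The plan is to adapt to the nonlocal framework the Benci--Cerami scheme of \cite{VBGC}, replacing the exponential decay of the limit ground state --- which is not available for $(-\Delta)^{s}$ --- by the polynomial decay and the nondegeneracy of the ground state $Q$ of \eqref{infinito} established in \cite{RFEL,RFELLS}. First I would fix the variational framework on the space $X_{0}^{s}(\Omega)$ of functions in $H^{s}(\R^{N})$ that vanish a.e.\ in $\R^{N}\setminus\Omega$, normed by $\|u\|^{2}=[u]_{s}^{2}+\|u\|_{L^{2}(\R^{N})}^{2}$, and --- in order to obtain a nonnegative solution --- I would replace the nonlinearity by $(u^{+})^{p-1}$, working with the $C^{1}$ functional
\[
I_{\Omega}(u)=\tfrac12\|u\|^{2}-\tfrac1p\int_{\R^{N}}(u^{+})^{p}\,dx ,
\]
which has mountain--pass geometry on $X_{0}^{s}(\Omega)$ (subcritical Sobolev embedding, $p>2$). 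A nonzero critical point $u$ of $I_{\Omega}$ solves $(-\Delta)^{s}u+u=(u^{+})^{p-1}$; testing against $u^{-}$ and using a standard nonlocal inequality forces $u^{-}=0$, so $u$ is a solution of $(P)$, and fractional elliptic regularity gives the required smoothness. Write $\mathcal N_{\Omega}$ for the Nehari manifold of $I_{\Omega}$ and $m_{\infty}$ for the ground state level of \eqref{infinito}, attained at $Q$.

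\medskip
Two preliminary steps are the usual ones. \textbf{(i)} I would show $m(\Omega):=\inf_{\mathcal N_{\Omega}}I_{\Omega}=m_{\infty}$ and that it is \emph{not} attained: since $\mathcal N_{\Omega}\subset\mathcal N_{\R^{N}}$ one has $m(\Omega)\ge m_{\infty}$, while testing the Nehari quotient with cut--off translates $Q(\cdot-y)\varphi$ (with $\varphi$ supported far from $\R^{N}\setminus\Omega$) and letting $|y|\to\infty$ gives $m(\Omega)\le m_{\infty}$, using only that the correction $Q(\cdot-y)(1-\varphi)$ is small in $H^{s}$; a minimizer, extended by zero, would be a ground state of \eqref{infinito}, hence a strictly positive translate of $Q$, which cannot vanish on $B_{\rho}(0)$. \textbf{(ii)} I would prove a splitting (Struwe--Lions type) lemma: every Palais--Smale sequence for $I_{\Omega}$ decomposes, up to a subsequence, into a solution of $(P)$ plus finitely many translates of $Q$ escaping to infinity, with additive energies; since any nonzero critical point has energy $\ge m_{\infty}$ and each bubble contributes exactly $m_{\infty}$, this yields the $(PS)_{c}$ condition for every $c\in(m_{\infty},2m_{\infty})$.

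\medskip
The core is a barycenter/linking construction. Following \cite{VBGC} I would introduce a barycenter--type map $\beta:\mathcal N_{\Omega}\to\R^{N}$ (a truncated centre of mass, chosen so that its continuity near the family $\{Q(\cdot-y)\}$ does not rely on decay estimates), normalised so that $\beta(Q(\cdot-y))=y$, together with a probe family $\Phi_{R}:S^{N-1}\to\mathcal N_{\Omega}$ obtained by projecting onto $\mathcal N_{\Omega}$ the cut--off translates $Q(\cdot-R\xi)\varphi$; for $R$ large, $\max_{S^{N-1}}I_{\Omega}\circ\Phi_{R}\le m_{\infty}+\sigma_{R}$ with $\sigma_{R}\to0$, and $\beta\circ\Phi_{R}$ is homotopic to the identity of $S^{N-1}$. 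The decisive point --- where the smallness of $\rho$ enters --- is the lemma: \emph{there exist $\rho^{*}>0$ and, for each $\rho<\rho^{*}$, some $\varepsilon=\varepsilon(\rho)>0$ such that $u\in\mathcal N_{\Omega}$ with $I_{\Omega}(u)\le m_{\infty}+\varepsilon$ forces $\beta(u)\notin\overline{B_{\rho}(0)}$.} This I would prove by contradiction: a sequence in $\mathcal N_{\Omega}$ with energies $\to m_{\infty}$ and $|\beta|\le\rho$ is minimizing, hence by step (i) and the uniqueness of $Q$ it converges, after a translation by $y_{k}$ with $|y_{k}|\to\infty$, to $Q$ in $H^{s}$ and --- by fractional regularity --- locally uniformly, so $\beta=y_{k}+o(1)\to\infty$, contradicting $|\beta|\le\rho$; here $\rho<\rho^{*}$ is used so that the energy defect that a function with bounded centre necessarily produces on $B_{\rho}(0)$ dominates $\varepsilon$, and so that $R$ can be chosen with $\sigma_{R}<\varepsilon$.

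\medskip
Finally, fixing $R$ with $\Phi:=\Phi_{R}$ satisfying $\max_{S^{N-1}}I_{\Omega}\circ\Phi\le m_{\infty}+\varepsilon$, I would consider
\[
c=\inf_{h\in\mathcal G}\ \max_{y\in\overline{B^{N}}}I_{\Omega}(h(y)),\qquad
\mathcal G=\bigl\{\,h\in C(\overline{B^{N}},\mathcal N_{\Omega}):h|_{S^{N-1}}=\Phi\,\bigr\}.
\]
The explicit homotopy $t\xi\mapsto$ (projection of $Q(\cdot-tR\xi)\varphi$) lies in $\mathcal G$ and, for $\rho$ small, stays below $2m_{\infty}$, so $\mathcal G\neq\emptyset$ and $c<2m_{\infty}$; conversely, if some $h\in\mathcal G$ had $\max I_{\Omega}\circ h\le m_{\infty}+\varepsilon$, the key lemma would make $\beta\circ h$ a continuous map of $\overline{B^{N}}$ into $\R^{N}\setminus\overline{B_{\rho}(0)}$ whose restriction to $S^{N-1}$ is homotopic to the identity of a sphere encircling $B_{\rho}(0)$ --- impossible --- whence $c\ge m_{\infty}+\varepsilon>m_{\infty}$. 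Thus $c\in(m_{\infty},2m_{\infty})$, where $(PS)_{c}$ holds by step (ii), and since the boundary values $\Phi$ sit strictly below $c$, the deformation lemma on $\mathcal N_{\Omega}$ yields a critical point $u$ of $I_{\Omega}$ at level $c>0$, which by the initial reduction is nonnegative and nontrivial, hence a solution of $(P)$. I expect the main obstacle to be precisely the construction of the third paragraph: making the barycenter and the probe family work --- smallness of $\sigma_{R}$, the identity $\beta(Q(\cdot-y))=y$ up to controlled errors, and the lower bound on the energy defect on $B_{\rho}(0)$ --- using only the polynomial decay and the nondegeneracy of $Q$ from \cite{RFEL,RFELLS}, together with the extra technical weight that the nonlocality of $(-\Delta)^{s}$ adds to the splitting analysis and to the manipulation of the cut--offs.
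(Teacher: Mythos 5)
Your proposal follows the same Benci--Cerami scheme that the paper implements: show the limit level $M=M_\infty$ is not attained on $X_0^s$, prove a Lions--Struwe splitting lemma for Palais--Smale sequences, use a (truncated) barycenter to obtain a lower energy gap, probe with cut-off translates of the limit ground state to get an upper bound strictly below $2m_\infty$, link via degree theory, and conclude with a deformation argument. The cosmetic differences --- working on the Nehari manifold for $I_\Omega$ rather than on the sphere $\{\|u\|_{L^p}=1\}$ with $J(u)=\|u\|^2$, and replacing the nonlinearity by $(u^+)^{p-1}$ rather than restricting $J$ to the cone $\mathcal P$ of nonnegative functions --- are standard reformulations and change nothing essential.

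The one place you are imprecise is exactly the technical heart of the barycenter step. A centre-of-mass map with the normalisation $\beta(Q(\cdot-y))=y$ for all $y$ cannot be a \emph{truncated} barycenter: if the weight $\chi(|x|)x$ is bounded (as it must be, since $\int|u|^2|x|\,dx$ is generally infinite for $u\in H^s$), then $\beta$ takes values in a bounded set, and the conclusion ``$\beta(u_k)=y_k+o(1)\to\infty$'' in your contradiction argument is simply false. The paper's version uses $\tau(u)=\int|u|^2\chi(|x|)x\,dx$ with $\chi(|x|)|x|\le R$, so $|\tau|$ is always $\le R$; the contradiction is obtained more delicately, by showing that the component $\bigl\langle\tau\bigl(Q(\cdot-y_n)\bigr),\,y_n/|y_n|\bigr\rangle$ is bounded \emph{below} by a fixed positive constant (using that $Q$ is radial, decreasing, and satisfies the Berestycki--Lions pointwise bound $Q(z)\lesssim |z|^{-N/2}$ to control the contribution from the half-space $\{\langle x,y_n\rangle<0\}$), which is then incompatible with $\tau(u_n)=0$. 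Your argument is repairable in exactly this way, but as written it asserts a property the truncated barycenter does not have. Relatedly, the paper's lower bound $c_0=\inf\{\|u\|^2: u\in\mathcal V\cap\mathcal P,\ \tau(u)=0\}>M$ does not require $\rho$ small; smallness of $\rho$ enters only to push the probe family's energies below $2^{(p-2)/p}M$, so your description of how $\rho<\rho^*$ is used conflates two separate estimates.
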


This work is organized as follows. In Section 2, we introduce some preliminary results that will be used in the paper. In Section 3, we show an important compactness result for energy functional, which is a key point in our arguments. In Section 4, we prove some estimates that will be used in Section 5 to prove Theorem \ref{teo 1}.

\section{Preliminary results}

For $s\in (0,1)$ and $N>2s$, the fractional Sobolev space of order $s$ on $\mathbb{R}^N$ is defined by
$$
H^{s}(\mathbb{R}^N) = \left\{ u\in L^{2}(\mathbb{R}^N):\;\;\int_{\mathbb{R}^N}\int_{\mathbb{R}^N} \frac{|u(x)-u(z)|^2}{|x-z|^{N+2s}}dzdx<\infty \right\},
$$
endowed with the norm
$$
\|u\|_{s} = \left( \int_{\mathbb{R}^N} |u(x)|^2 dx + \int_{\mathbb{R}^N} \int_{\mathbb{R}^N} \frac{|u(x) - u(z)|^2}{|x-z|^{N+2s}}dzdx\right)^{1/2}.
$$

\indent We recall the fractional version of the Sobolev embeddings (see \cite{PFAQJT}).
\begin{theorem}\label{sobolev}
Let $s\in (0,1)$, then there exists a positive constant $C=C(N,s)>0$ such that
\begin{equation}\label{P01}
\|u\|_{L^{2_{s}^{*}}(\R^N)}^{2} \leq C\int_{\R^N}\int_{\R^N}\frac{|u(x) - u(y)|^2}{|x-y|^{N+2s}}dy dx
\end{equation}
and then $H^s(\R^N)\hookrightarrow L^q(\R^N)$ is continuous for all $q\in [2, 2_{s}^{*}]$. {{Moreover, if $\Theta \subset \mathbb{R}^N$ is a bounded domain, we have that the embedding $H^s(\R^N)\hookrightarrow L^q(\Theta)$ is compact for any $q\in [2, 2_{s}^{*})$.}}
\end{theorem}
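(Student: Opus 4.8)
The plan is to recover the fractional Sobolev inequality \eqref{P01} and the associated embeddings from standard ingredients. First I would establish the homogeneous inequality \eqref{P01}. The fastest route is via the Fourier transform: for $u$ in the Schwartz class one has the identity
\[
\int_{\R^N}\int_{\R^N}\frac{|u(x)-u(y)|^2}{|x-y|^{N+2s}}\,dy\,dx
= C(N,s)^{-1}\int_{\R^N}|\xi|^{2s}\,|\widehat{u}(\xi)|^2\,d\xi
= C(N,s)^{-1}\||(-\Delta)^{s/2}u\|_{L^2(\R^N)}^2 ,
\]
obtained by writing $|u(x)-u(y)|^2$ in terms of $\widehat u$, applying Plancherel in the $x$ variable, and computing the resulting one-dimensional integral $\int_{\R^N}\frac{1-\cos(\zeta\cdot w)}{|w|^{N+2s}}\,dw = c|\zeta|^{2s}$ by scaling and rotation invariance. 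Then \eqref{P01} follows from the classical Hardy--Littlewood--Sobolev / Stein--Weiss estimate $\|(-\Delta)^{s/2}u\|_{L^2}\gtrsim \|u\|_{L^{2_s^*}}$, valid precisely because $2_s^* = \frac{2N}{N-2s}$ and $N>2s$; a density argument extends it from Schwartz functions to all of $H^s(\R^N)$. Alternatively, one can cite this directly from the fractional Sobolev literature, e.g. the Gagliardo-norm formulation in the references already listed.

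Next I would deduce the continuous embedding $H^s(\R^N)\hookrightarrow L^q(\R^N)$ for every $q\in[2,2_s^*]$. The endpoint $q=2_s^*$ is \eqref{P01} combined with the fact that the Gagliardo seminorm is controlled by $\|u\|_s$; the endpoint $q=2$ is trivial from the definition of $\|\cdot\|_s$. For intermediate $q$ I would interpolate: $\|u\|_{L^q}\le \|u\|_{L^2}^{1-\theta}\|u\|_{L^{2_s^*}}^{\theta}$ with $\theta\in[0,1]$ determined by $\frac1q=\frac{1-\theta}{2}+\frac{\theta}{2_s^*}$, and then bound each factor by a constant times $\|u\|_s$. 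This gives the asserted continuity with a constant depending only on $N$, $s$, $q$.

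Finally, for the local compactness statement: let $\Theta\subset\R^N$ be a bounded domain and $q\in[2,2_s^*)$. Given a bounded sequence $(u_n)$ in $H^s(\R^N)$, I would first pass to a weakly convergent subsequence $u_n\rightharpoonup u$ in $H^s(\R^N)$. Restricting to a bounded Lipschitz open set $\Theta'$ containing $\overline\Theta$, the restrictions lie in a bounded set of $H^s(\Theta')$, and the classical Rellich--Kondrachov theorem for the fractional Sobolev space on bounded Lipschitz domains (again available in the references cited, e.g. the compact embedding $H^s(\Theta')\hookrightarrow\hookrightarrow L^q(\Theta')$ for $q<2_s^*$) yields a subsequence converging strongly in $L^q(\Theta')$, hence in $L^q(\Theta)$. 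The only point requiring a little care is that the Gagliardo seminorm of $u_n$ over $\Theta'\times\Theta'$ is bounded by the full seminorm over $\R^N\times\R^N$, so boundedness in $H^s(\R^N)$ does transfer to boundedness in $H^s(\Theta')$; this is immediate since the integrand is nonnegative. I expect the main (though still routine) obstacle to be assembling the fractional Rellich--Kondrachov statement in the exact form needed — extending from a ball or cube, where it is cleanest, to a general bounded domain via an extension operator or simply by enlarging $\Theta$ to a cube, since compactness into $L^q(\Theta)$ only gets easier on smaller sets.
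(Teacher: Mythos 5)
Your proposal is correct, but there is nothing to compare it against inside the paper: the authors do not prove Theorem \ref{sobolev} at all, they simply recall it with a citation to Felmer--Quaas--Tan \cite{PFAQJT} (and the same material is available in \cite{EDNGPEV}). What you have written is essentially a reconstruction of the standard proof that those references contain: the Fourier identity expressing the Gagliardo seminorm as $C(N,s)^{-1}\int_{\R^N}|\xi|^{2s}|\widehat u(\xi)|^2\,d\xi$ (computed via Plancherel and the scaling/rotation-invariant integral of $(1-\cos(\zeta\cdot w))/|w|^{N+2s}$), the Hardy--Littlewood--Sobolev step giving the critical inequality \eqref{P01} under $N>2s$, interpolation between $q=2$ and $q=2_s^*$ for the intermediate exponents, and the fractional Rellich--Kondrachov theorem on a bounded extension (e.g. Lipschitz) domain for the local compactness. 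Your two points of care are exactly the right ones: the density/Fatou argument extending \eqref{P01} from Schwartz functions to all of $H^s(\R^N)$, and the observation that a bounded sequence in $H^s(\R^N)$ restricts to a bounded sequence in $H^s(\Theta')$ because the Gagliardo seminorm over $\Theta'\times\Theta'$ is dominated by the full one, with $\Theta'$ chosen as a large ball containing $\overline\Theta$ so that the compact embedding for $q<2_s^*$ applies and then passes to the smaller set $\Theta$. So the proposal buys a self-contained proof where the paper delegates to the literature; mathematically it is the canonical argument, with no gaps.
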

{Hereafter, we denote by $X_{0}^s \subset H^s(\R^N)$ the subspace defined by}
$$
X_{0}^s = \Bigl\{ u\in H^s(\R^N):\;\;u = 0\;\;\mbox{a.e. in}\;\R^N\setminus \Omega  \Bigr\},
$$
endowed with the norm $\|\cdot\|_s$. Moreover we introduce the following norm
\begin{equation}\label{P02}
\|u\|= \left(\int_{\Omega} |u(x)|^2dx + \iint_{\mathcal{Q}} \frac{|u(x)-u(z)|^2}{|x-z|^{N+2s}}dzdx\right)^{\frac{1}{2}},
\end{equation}
where $\mathcal{Q} = \R^{2N}\setminus (\Omega^c\times \Omega^c)$. {We point out that $\|u\|_s=\|u\|$ for any  $u\in X_{0}^{s}$. Since $\partial \Omega$ is bounded and smooth, by \cite[Theorem 2.6]{Molicab}, we have the following result.}
\begin{theorem}\label{density}
 The space $C_{0}^{\infty}(\Omega)$ is dense in  $(X_{0}^{s},\|\cdot\|)$.
\end{theorem}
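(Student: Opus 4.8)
The plan is to deduce the assertion from the corresponding density statement for \emph{bounded} domains with smooth boundary, namely \cite[Theorem 2.6]{Molicab}, by a two-step reduction: first approximate an arbitrary $u\in X_0^s$ by elements of $X_0^s$ with compact support, and then localize the problem inside a large ball. The only feature of the exterior domain that is not already covered by the bounded case is the unboundedness of $\Omega$; near $\partial\Omega$ nothing new occurs, since $\partial\Omega$ is bounded and smooth, so one expects no serious difficulty beyond a careful truncation estimate.

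\emph{Step 1 (truncation).} Fix $u\in X_0^s$ and a cut-off $\psi\in C_0^\infty(\R^N)$ with $0\le\psi\le 1$, $\psi\equiv1$ on $B_1(0)$ and $\psi\equiv0$ outside $B_2(0)$; set $\psi_R(x)=\psi(x/R)$ and $u_R=\psi_R u$. Since $u=0$ a.e.\ in $\R^N\setminus\Omega$, also $u_R=0$ a.e.\ there, so $u_R\in X_0^s$, and $u_R$ has compact support. I would show $\|u_R-u\|_s\to 0$ as $R\to\infty$. The $L^2$-part follows from dominated convergence. For the Gagliardo seminorm, writing $\phi_R:=\psi_R-1$, one uses the identity
\[
(\phi_R u)(x)-(\phi_R u)(z)=\phi_R(x)\,[u(x)-u(z)]+u(z)\,[\phi_R(x)-\phi_R(z)]
\]
and estimates the seminorm of $u_R-u=\phi_R u$ by two pieces. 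The first is bounded by $\iint_{\{|x|>R\}\cup\{|z|>R\}}|x-z|^{-N-2s}|u(x)-u(z)|^2\,dz\,dx$, which tends to $0$ by dominated convergence since the Gagliardo seminorm of $u$ is finite. In the second piece one uses $|\phi_R(x)-\phi_R(z)|\le\min\{2,\,C|x-z|/R\}$ and the fact that $\phi_R\equiv 0$ on $B_R(0)$; splitting the integral according to $|x-z|\lessgtr 1$ and $|z|\lessgtr R/2$ yields a bound of order $R^{-2}+R^{-2s}+\int_{|z|>R/2}|u|^2\,dz$ (finite because $2-2s>0$), hence $o(1)$.

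\emph{Step 2 (localization).} Given $\varepsilon>0$, by Step 1 fix $R$ so large that $\|u_R-u\|_s<\varepsilon$, $\operatorname{supp}u_R\subset B_R(0)$, and $\partial\Omega\subset B_R(0)$ (possible, as $\partial\Omega$ is bounded). Then $\Omega_R:=\Omega\cap B_{R+1}(0)$ is a bounded open set whose boundary is the disjoint union $\partial\Omega\cup\partial B_{R+1}(0)$ of smooth hypersurfaces, so \cite[Theorem 2.6]{Molicab} applies to $\Omega_R$. Since $u_R$ vanishes a.e.\ outside $\Omega$ and outside $B_R(0)\subset\subset\Omega_R$, it vanishes a.e.\ outside $\Omega_R$, hence it belongs to $\{v\in H^s(\R^N):v=0\text{ a.e.\ in }\R^N\setminus\Omega_R\}$, and there exists $\varphi\in C_0^\infty(\Omega_R)\subset C_0^\infty(\Omega)$ with $\|\varphi-u_R\|_s<\varepsilon$. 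Using $\|\cdot\|_s=\|\cdot\|$ on $X_0^s$ (and $C_0^\infty(\Omega)\subset X_0^s$) we conclude $\|\varphi-u\|<2\varepsilon$, proving density.

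\emph{On the main difficulty.} The only delicate point is the second term of the Gagliardo estimate in Step 1: a crude bound only gives $O(\|u\|_{L^2}^2)$, so to obtain decay one must genuinely exploit that the cut-off is \emph{constant} ($=0$) on $B_R(0)$ together with the fact that the residual region escapes to infinity. If a self-contained argument avoiding \cite[Theorem 2.6]{Molicab} is preferred, then after Step 1 one can instead push the support off $\partial\Omega$ by flowing along a smooth, compactly supported vector field pointing strictly into $\Omega$ along $\partial\Omega$ (available since $\partial\Omega$ is compact and smooth) and then mollify; in that route the analogous obstacle is the continuity on $H^s(\R^N)$ of composition with near-identity $C^1$-diffeomorphisms, which rests on a change-of-variables estimate for the double integral defining the seminorm.
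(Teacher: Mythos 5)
The paper proves this result only by citing \cite[Theorem 2.6]{Molicab}, which establishes the density of $C_0^\infty$ for \emph{bounded} open sets with Lipschitz boundary; since $\Omega$ here is unbounded, the citation alone is not a proof, and your two-step reduction (truncation to compact support, then localization inside a large ball so as to invoke the bounded-domain result) is correct and supplies exactly the argument the paper leaves implicit. Two small remarks. First, in Step~2, $\operatorname{supp} u_R\subset\overline{B_{2R}(0)}$, not $B_R(0)$, so one should take $\Omega_R:=\Omega\cap B_{2R+1}(0)$ (or rename the cut-off scale); this is purely cosmetic. Second, your ``on the main difficulty'' paragraph overstates the subtlety of the second Gagliardo piece: using $|\phi_R(x)-\phi_R(z)|\le\min\{2,\,C|x-z|/R\}$ and placing the splitting threshold at $|x-z|\sim R$ (rather than at $|x-z|=1$) gives the uniform-in-$z$ bound
\[
\int_{\R^N}\frac{|\phi_R(x)-\phi_R(z)|^2}{|x-z|^{N+2s}}\,dx\;\le\;C'\,R^{-2s},
\]
so the second piece is at most $C'R^{-2s}\|u\|_{L^2(\R^N)}^2=o(1)$ with no appeal to the vanishing of $\phi_R$ on $B_R(0)$ nor to the residual region escaping to infinity. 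Your more elaborate split is not wrong, but the crude bound already decays, so the argument is simpler than you feared.
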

\par
\smallskip
\noindent
In what follows, we denote by $H^{s}(\Omega)$ the usual fractional Sobolev space endowed with the norm
$$
\|u\|_{H^{s}} = \left(\int_{\Omega}|u(x)|^2dx + \int_{\Omega}\int_{\Omega} \frac{|u(x) - u(z)|^2}{|x-z|^{N+2s}}dzdx\right)^{\frac{1}{2}}.
$$
Related to these fractional spaces, we have the following properties
{{\begin{proposition}\label{Pprop01}
The following assertions hold true:
\begin{itemize}
\item[$(i)$] If $v\in X_{0}^s$, we have that $H^{s}(\Omega)$ and
$$
\|v\|_{H^{s}} \leq \|v\|_{s} = \|v\|.
$$
\item[$(ii)$] Let $\Theta$ an open set with continuous boundary. Then, there exists a positive constant $\mathfrak{C} = \mathfrak{C}(N,s)$, such that
$$
\|u\|_{L^{2_{s}^{*}}(\Theta)}^{2} = \|u\|_{L^{2_{s}^{*}}(\R^N)}^{2}\leq \mathfrak{C} \iint_{\mathbb{R}^{2N}} \frac{|u(x) - u(z)|^2}{|x-z|^{N+2s}}dzdx, \quad ( \mbox{see \rm \cite[Theorem 6.5]{EDNGPEV}}\,)
$$
for every $u\in X_0^s$.
\end{itemize}
\end{proposition}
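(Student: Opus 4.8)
The plan is to read off both items from the fractional Sobolev embedding (Theorem~\ref{sobolev}) together with the single structural fact defining $X_{0}^{s}$, namely that its elements vanish almost everywhere on $\R^{N}\setminus\Omega$. Nothing deeper is needed: the argument is essentially bookkeeping about which region the various integrals are taken over.

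For $(i)$, let $v\in X_{0}^{s}\subset H^{s}(\R^{N})$. Since $v\equiv 0$ a.e.\ on $\R^{N}\setminus\Omega$, the $L^{2}$ contributions coincide, $\int_{\Omega}|v|^{2}\,dx=\int_{\R^{N}}|v|^{2}\,dx$. Because $\Omega\times\Omega\subset\R^{2N}$, the Gagliardo seminorm over $\Omega\times\Omega$ is dominated by the one over $\R^{2N}$:
$$
\int_{\Omega}\int_{\Omega}\frac{|v(x)-v(z)|^{2}}{|x-z|^{N+2s}}\,dz\,dx\;\le\;\iint_{\R^{2N}}\frac{|v(x)-v(z)|^{2}}{|x-z|^{N+2s}}\,dz\,dx .
$$
Adding the two estimates yields $\|v\|_{H^{s}}^{2}\le\|v\|_{s}^{2}$; finiteness of the right-hand side shows in particular that the restriction $v|_{\Omega}$ belongs to $H^{s}(\Omega)$, and recalling that $\|v\|_{s}=\|v\|$ for $v\in X_{0}^{s}$ gives the claimed chain $\|v\|_{H^{s}}\le\|v\|_{s}=\|v\|$.

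For $(ii)$, let $u\in X_{0}^{s}$. Since $u\equiv 0$ a.e.\ outside $\Omega$ and $\Omega$ is contained in $\Theta$ (in the application one may simply take $\Theta=\Omega$), the function $u$ is supported, up to a null set, in $\Theta$, so $\|u\|_{L^{2_{s}^{*}}(\Theta)}=\|u\|_{L^{2_{s}^{*}}(\R^{N})}$. The remaining inequality
$$
\|u\|_{L^{2_{s}^{*}}(\R^{N})}^{2}\le\mathfrak{C}\iint_{\R^{2N}}\frac{|u(x)-u(z)|^{2}}{|x-z|^{N+2s}}\,dz\,dx
$$
is precisely estimate \eqref{P01} of Theorem~\ref{sobolev} (equivalently, \cite[Theorem 6.5]{EDNGPEV}) applied to $u\in H^{s}(\R^{N})$, with $\mathfrak{C}=C(N,s)$. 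There is no genuine obstacle here; the only point deserving a line of justification is the identity $\|u\|_{L^{2_{s}^{*}}(\Theta)}=\|u\|_{L^{2_{s}^{*}}(\R^{N})}$, which rests on the convention that the domain $\Omega$ of problem $(P)$ sits inside $\Theta$, after which both assertions are immediate consequences of the fractional Sobolev inequality.
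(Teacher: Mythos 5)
Your verification is correct, and it matches what the paper implicitly intends: the paper states Proposition~\ref{Pprop01} without proof, citing \cite[Theorem 6.5]{EDNGPEV}, and your argument is precisely the straightforward bookkeeping (monotonicity of the integrals over $\Omega\times\Omega\subset\R^{2N}$ for $(i)$, and the vanishing of $u$ outside $\Omega$ plus estimate \eqref{P01} for $(ii)$) that justifies it. The only small caveat, which you already flag, is that the identity $\|u\|_{L^{2_s^*}(\Theta)}=\|u\|_{L^{2_s^*}(\R^N)}$ in $(ii)$ tacitly presumes $\Omega\subset\Theta$ (as in the intended application $\Theta=\Omega$), since the paper's wording leaves $\Theta$ otherwise unconstrained.
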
}}


The following lemma is a fractional version of the concentration compactness principle due to Lions, whose the proof can be seen in \cite{PFAQJT}.
\begin{lemma}\label{CClem}
Let $N\geq 2$ and $q\in [2, 2_{s}^{*})$. Assume that $\{u_{n}\} \subset H^{s}(\mathbb{R}^{N})$ is a bounded sequence satisfying
\begin{equation}\label{P03}
\lim_{n\to \infty} \sup_{y\in \mathbb{R}^{N}}\int_{B(y,R)}|u_{n}(x)|^{q}dx = 0,
\end{equation}
for some $R>0$. Then $u_{n} \to 0$ in $L^{p}(\mathbb{R}^{N})$ for $2 < p < 2_{s}^{*}$.
\end{lemma}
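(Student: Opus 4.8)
The plan is to transplant Lions' classical vanishing argument to the fractional Sobolev setting; the only place where the nonlocality is felt is in controlling the Gagliardo double integral under a covering of $\R^N$ with bounded overlap.

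First I would reduce to $q=2$: for every ball $B_R(y)$, H\"older's inequality gives $\int_{B_R(y)}|u_n|^2\,dx\le|B_R|^{1-2/q}\big(\int_{B_R(y)}|u_n|^q\,dx\big)^{2/q}$, so \eqref{P03} forces $\tau_n^2:=\sup_{y\in\R^N}\int_{B_R(y)}|u_n|^2\,dx\to0$ as $n\to\infty$, and it suffices to conclude under this last assumption. Next, fix $p\in(2,2_s^*)$ and $\theta\in(0,1)$ with $\frac1p=\frac{1-\theta}{2}+\frac{\theta}{2_s^*}$. On a ball $B=B_R(y)$, H\"older interpolation between $L^2(B)$ and $L^{2_s^*}(B)$ together with the fractional Sobolev embedding on $B$ (cf. \cite{EDNGPEV}) produces a constant $C=C(N,s,R)$, independent of the centre $y$ by translation invariance, such that
\begin{equation*}
\|u\|_{L^p(B)}^p\le C\,\|u\|_{L^2(B)}^{p(1-\theta)}\,\|u\|_{H^s(B)}^{p\theta},
\end{equation*}
for all $u\in H^s(\R^N)$, where $\|u\|_{H^s(B)}^2=\|u\|_{L^2(B)}^2+\iint_{B\times B}\frac{|u(x)-u(z)|^2}{|x-z|^{N+2s}}\,dz\,dx$.

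I would then cover $\R^N$ by a family of balls $\{B_i=B_R(y_i)\}_i$ such that no point of $\R^N$ belongs to more than $\kappa=\kappa(N)$ of them; this also bounds the overlap of the products $B_i\times B_i$, whence $\sum_i\|u_n\|_{L^2(B_i)}^2\le\kappa\|u_n\|_{L^2(\R^N)}^2$ and $\sum_i\iint_{B_i\times B_i}\frac{|u_n(x)-u_n(z)|^2}{|x-z|^{N+2s}}\,dz\,dx\le\kappa[u_n]_{H^s(\R^N)}^2$, so $\sum_i\|u_n\|_{H^s(B_i)}^2\le\kappa\|u_n\|_{H^s(\R^N)}^2\le\kappa M^2$ with $M:=\sup_n\|u_n\|_{H^s(\R^N)}<\infty$. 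Summing the local inequality over $i$ and using $\|u_n\|_{L^2(B_i)}\le\tau_n$ for every $i$, one distinguishes two regimes: if $p\theta\ge2$ one bounds $\|u_n\|_{H^s(B_i)}^{p\theta}\le M^{p\theta-2}\|u_n\|_{H^s(B_i)}^2$ and factors $\tau_n^{p(1-\theta)}$ out of the sum; if $p\theta<2$ one applies H\"older for series with conjugate exponents $\tfrac{2}{2-p\theta}$ and $\tfrac{2}{p\theta}$, using $\tfrac{2p(1-\theta)}{2-p\theta}\ge2$ (equivalent to $p\ge2$) to extract a power of $\tau_n$. Since $p>2$, in either case there is $\delta=\delta(N,s,p)>0$ (namely $\delta=\tfrac{2_s^*-p}{2_s^*-2}$ in the first regime and $\delta=\tfrac{p-2}{2}$ in the second) for which
\begin{equation*}
\|u_n\|_{L^p(\R^N)}^p\le\sum_i\|u_n\|_{L^p(B_i)}^p\le C\,\tau_n^{2\delta}\xrightarrow[\;n\to\infty\;]{}0 ,
\end{equation*}
which proves $u_n\to0$ in $L^p(\R^N)$ for every $p\in(2,2_s^*)$. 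Alternatively, one may carry out this estimate only for a single $p_0$ close to $2$ and recover the remaining exponents by interpolating with the uniform $L^{2_s^*}(\R^N)$ bound furnished by Theorem \ref{sobolev}.

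There is no genuinely new obstacle here --- the statement is Lions' lemma in fractional guise --- but three bookkeeping points require some care: the translation invariance of the Sobolev constant on a ball of fixed radius; the bounded-overlap estimate for the Gagliardo seminorm, so that $\sum_i[u_n]_{H^s(B_i)}^2\le\kappa[u_n]_{H^s(\R^N)}^2$; and the control of the exponents $\theta$, $p(1-\theta)$, $p\theta$ ensuring that a strictly positive power of $\tau_n$ survives for every admissible $p$.
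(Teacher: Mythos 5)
Your proof is correct. Note that the paper itself does not prove Lemma \ref{CClem} at all: it simply cites \cite{PFAQJT} (Felmer--Quaas--Tan), so your argument is a self-contained reconstruction of what the cited source does, namely the classical Lions vanishing argument transplanted to $H^s$. The three points you flag are exactly the right ones and are handled correctly: the reduction to $q=2$ by H\"older is legitimate since $q\ge 2$; the local inequality $\|u\|_{L^p(B)}^p\le C\|u\|_{L^2(B)}^{p(1-\theta)}\|u\|_{H^s(B)}^{p\theta}$ holds with a constant depending only on $N,s,R$ because a ball is an extension domain and the constant is translation invariant; and the bounded-overlap bound for the Gagliardo terms is valid because $(x,z)\in B_i\times B_i$ forces $x\in B_i$, so $\sum_i\chi_{B_i\times B_i}\le\kappa$. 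The exponent bookkeeping also checks out: in the regime $p\theta\ge 2$ you extract $\tau_n^{p(1-\theta)}$ with $p(1-\theta)=2(2_s^*-p)/(2_s^*-2)>0$, and in the regime $p\theta<2$ the H\"older-for-series step extracts $\tau_n^{p-2}>0$, using $2p(1-\theta)/(2-p\theta)\ge 2$, which is indeed equivalent to $p\ge 2$. Either the full two-regime argument or your alternative (prove it for one $p_0$ close to $2$ and interpolate against the uniform $L^{2_s^*}$ bound from Theorem \ref{sobolev}) closes the proof, so there is no gap; the only difference from the paper is that you supply the proof the paper delegates to the literature.
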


From now on, $M_\infty$ designates the following constant
\begin{equation}\label{P04}
M_\infty := \inf \left\{\|u\|_{s}^{2}:\;u\in H^s(\R^N)\;\;\mbox{and}\;\;\int_{\R^N}|u(x)|^pdx = 1  \right\},
\end{equation}
which is positive  by Theorem \ref{sobolev}. Furthermore, for any $v\in H^s(\R^N)$ and $z\in \R^N$, we set the function
$$
v^{z}(x) := v(x+z).
$$
Then, by doing the change of variable $\tilde{x} = x+z$ and $\tilde{y} = y+z$, it is easily seen that
\begin{equation}\label{P07}
\|v^z\|_{s}^{2} = \|v\|_{s}^{2}\;\;\;\mbox{as well as} \,\,\, \|v^z\|_{L^p(\mathbb{R}^N)} = \|v\|_{L^p(\mathbb{R}^N)}.
\end{equation}

Arguing as in  \cite{VBGC} the following result holds true.

\begin{theorem}\label{tm01}
Let $\{u_n\} \subset H^s(\R^N)$ be a minimizing sequence such that
$$
\|u_n\|_{L^p(\mathbb{R}^N)} = 1\;\;\mbox{and}\;\;\|u_n\|_{s}^{2} \to M_\infty \;\;\mbox{as}\;\;n\to +\infty.
$$
Then, there is a sequence $\{y_n\}\subset \R^N$ such that $\{u_{n}^{y_n}\}$ has a convergent subsequence, and so, $M_\infty$ is attained.
\end{theorem}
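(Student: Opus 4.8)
The plan is to run Lions' concentration--compactness dichotomy on $\{u_n\}$, following the scheme of \cite{VBGC}. Since $\|u_n\|_{L^p(\R^N)}=1$ and $\|u_n\|_s^2\to M_\infty$, the sequence is bounded in $H^s(\R^N)$. The vanishing alternative is excluded at once by Lemma~\ref{CClem}: if $\sup_{y\in\R^N}\int_{B(y,R)}|u_n|^p\,dx\to0$ for some $R>0$, then $u_n\to0$ in $L^p(\R^N)$, contradicting $\|u_n\|_{L^p(\R^N)}=1$. Hence there are $\delta>0$, $R>0$ and, up to a subsequence, a sequence $\{y_n\}\subset\R^N$ with $\int_{B(y_n,R)}|u_n|^p\,dx\ge\delta$ for all $n$.

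Next I would translate the sequence: set $v_n:=u_n^{y_n}$. By the invariance \eqref{P07} we have $\|v_n\|_s^2=\|u_n\|_s^2\to M_\infty$ and $\|v_n\|_{L^p(\R^N)}=1$, so $\{v_n\}$ is still a minimizing sequence, and moreover $\int_{B(0,R)}|v_n|^p\,dx\ge\delta$. Up to a further subsequence, $v_n\rightharpoonup v$ in $H^s(\R^N)$, and by the local compact embedding in Theorem~\ref{sobolev} (recall $p\in(2,2_s^*)$), $v_n\to v$ in $L^p(B(0,R))$ and a.e. in $\R^N$. Passing to the limit in $\int_{B(0,R)}|v_n|^p\,dx\ge\delta$ gives $\int_{B(0,R)}|v|^p\,dx\ge\delta$, so $v\not\equiv0$; this non-triviality of the weak limit is precisely what the rescaling buys us.

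To upgrade the weak limit to a strong one I would use two splitting identities for $w_n:=v_n-v$, which satisfies $w_n\rightharpoonup0$ in $H^s(\R^N)$: the Hilbertian expansion $\|v_n\|_s^2=\|v\|_s^2+\|w_n\|_s^2+o(1)$ (valid since $\|\cdot\|_s^2$ is the square of a Hilbert norm and $w_n\rightharpoonup0$), and the Brezis--Lieb identity $1=\|v_n\|_{L^p(\R^N)}^p=\|v\|_{L^p(\R^N)}^p+\|w_n\|_{L^p(\R^N)}^p+o(1)$ (from the a.e. convergence and the uniform $L^p$--bound). Writing $a:=\|v\|_{L^p(\R^N)}^p\in(0,1]$ and $b:=\lim_n\|w_n\|_{L^p(\R^N)}^p$ along a subsequence, so $a+b=1$, and using $\|u\|_s^2\ge M_\infty\|u\|_{L^p(\R^N)}^2$ for every $u\in H^s(\R^N)$ (just \eqref{P04} after normalizing), I obtain
$$
M_\infty=\|v\|_s^2+\lim_{n\to\infty}\|w_n\|_s^2\ \ge\ M_\infty\big(a^{2/p}+b^{2/p}\big).
$$
Since $p>2$, we have $0<2/p<1$, hence $t^{2/p}\ge t$ for $t\in[0,1]$, with equality only at $t\in\{0,1\}$; therefore $a^{2/p}+b^{2/p}\ge a+b=1$, and equality (forced by the display) together with $a>0$ gives $b=0$, i.e. $w_n\to0$ in $L^p(\R^N)$ and $a=1$. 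Then $\|v\|_{L^p(\R^N)}=1$, so $\|v\|_s^2\ge M_\infty$ by definition, and plugging this back into the display forces $\lim_n\|w_n\|_s^2=0$. Consequently $v_n\to v$ strongly in $H^s(\R^N)$, with $\|v\|_{L^p(\R^N)}=1$ and $\|v\|_s^2=M_\infty$, so $M_\infty$ is attained and $\{u_n^{y_n}\}$ converges along the subsequence, as claimed.

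I expect the dichotomy step to be the only delicate point: ruling out mass splitting requires both decomposition identities to hold simultaneously and, crucially, the strict subadditivity encoded in the condition $p>2$ (the argument degenerates for $p=2$). Everything else — the boundedness, the exclusion of vanishing via Lemma~\ref{CClem}, the extraction of an a.e.\ convergent subsequence, and the final bookkeeping with the weak lower semicontinuity of $\|\cdot\|_s$ — is routine.
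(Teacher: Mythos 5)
Your proof is correct and is the natural adaptation of the Benci--Cerami argument that the paper merely gestures at by writing ``Arguing as in \cite{VBGC}.'' You rule out vanishing with the Lions-type Lemma~\ref{CClem} (which the paper states precisely for this purpose), translate to obtain a nontrivial weak limit $v$, and then close the dichotomy with the Hilbertian orthogonal expansion for $\|\cdot\|_s^2$, the Brezis--Lieb identity in $L^p$, and the strict superadditivity $a^{2/p}+b^{2/p}\ge a+b$ coming from $p>2$; this is exactly the mechanism used in \cite{VBGC} for the local operator. The only superficial difference from the Benci--Cerami original (and from the related passage in the paper's proof of Lemma~\ref{Elm01}) is that they realize the non-vanishing step via a covering of $\R^N$ by unit hypercubes and then pick $y_n$ as the center of a cube of maximal $L^p$-mass, whereas you invoke Lemma~\ref{CClem} directly to produce $\{y_n\}$; these are interchangeable formulations of the same dichotomy step, and the rest of the bookkeeping is identical.
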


As a byproduct of the above result the next corollary is obtained.

\begin{corollary}
\label{C1}There is $v \in H^{s}(\mathbb{R}^N)$ such that $\|v\|_s=M_\infty$ and $\|v\|_{L^{p}(\mathbb{R}^N)}=1$.	
\end{corollary}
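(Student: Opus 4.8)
The plan is to read the statement off from Theorem~\ref{tm01}, which already carries out the only delicate step (the concentration--compactness analysis of minimizing sequences for $M_\infty$); what remains is just to produce an appropriate minimizing sequence and to pass to the limit by elementary continuity.

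Concretely, I would first use that $M_\infty$ in \eqref{P04} is an infimum, so there is a sequence $\{u_n\}\subset H^s(\R^N)$ with $\int_{\R^N}|u_n(x)|^p\,dx=1$ for every $n$ and $\|u_n\|_s^2\to M_\infty$ as $n\to\infty$; this is precisely a minimizing sequence in the sense of Theorem~\ref{tm01}. That theorem then gives translations $\{y_n\}\subset\R^N$ for which a subsequence of $\{u_n^{y_n}\}$ converges strongly in $H^s(\R^N)$ to some $v\in H^s(\R^N)$. By the translation invariance \eqref{P07}, the translated sequence is still minimizing, that is $\|u_n^{y_n}\|_s^2=\|u_n\|_s^2\to M_\infty$ and $\|u_n^{y_n}\|_{L^p(\R^N)}=\|u_n\|_{L^p(\R^N)}=1$. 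Passing to the limit along the convergent subsequence, continuity of $\|\cdot\|_s$ with respect to strong convergence in $H^s(\R^N)$ yields $\|v\|_s^2=M_\infty$, while the continuous embedding $H^s(\R^N)\hookrightarrow L^p(\R^N)$ from Theorem~\ref{sobolev} (valid since $p\in(2,2_s^*)$) forces $u_n^{y_n}\to v$ in $L^p(\R^N)$ and hence $\|v\|_{L^p(\R^N)}=1$. In particular $v\not\equiv 0$, so $M_\infty$ is attained at $v$, which is the assertion.

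I do not expect a genuine obstacle here: the loss of compactness caused by translations, and the vanishing/dichotomy alternatives of the concentration--compactness method, are absorbed entirely into Theorem~\ref{tm01} (which itself rests on the fractional Lions-type lemma, Lemma~\ref{CClem}, following \cite{VBGC}). The only points I would be careful to state are that the minimizing sequence must be translated before taking the limit, and that the normalization $\|v\|_{L^p(\R^N)}=1$ survives the passage to the limit precisely because the embedding $H^s(\R^N)\hookrightarrow L^p(\R^N)$ is continuous for $p$ in the subcritical range $(2,2_s^*)$.
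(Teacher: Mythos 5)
Your proposal is correct and is essentially what the paper intends: the paper introduces Corollary~\ref{C1} simply as a ``byproduct'' of Theorem~\ref{tm01}, and your argument just spells out the routine details (translation invariance~\eqref{P07}, continuity of the norm under strong convergence, and the continuous embedding $H^s(\R^N)\hookrightarrow L^p(\R^N)$) that make that byproduct explicit. The only cosmetic point is that the paper's statement $\|v\|_s=M_\infty$ should read $\|v\|_s^2=M_\infty$ to match the definition~\eqref{P04}, which is exactly what your proof delivers.
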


\section{A compactness result for energy functional}
 In this section, we establish the existence of non negative solution for problem ($P$).
Through this section we fix on $X_{0}^s$ the norm
$$
\|u\| := \left(\iint_{\mathcal{Q}} \frac{|u(x)-u(y)|^2}{|x-y|^{n+2s}}dy dx + \int_{\Omega}|u|^2dx\right)^{1/2}.
$$
and denote by $M>0$ the number
\begin{equation}\label{E02}
M := \inf \left\{\|u\|^2:\;\;u\in X_{0}^s,\;\;\int_{\Omega}|u(x)|^pdx = 1 \right\}.
\end{equation}

\begin{theorem}\label{Etm01}
The equality $M_\infty=M$ holds true. {Hence, there is no $u \in X_0^{s}$ such that $\|u\|^{2}=M$ and $\|u\|_{L^p(\mathbb{R}^N)}=1$, and so, the minimization problem \eqref{E02} does not have solution.} 
\end{theorem}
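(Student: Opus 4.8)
The strategy is to establish the two inequalities $M\geq M_\infty$ and $M\leq M_\infty$ separately, and then to exclude a minimizer for \eqref{E02} by using the structure of the minimizers of $M_\infty$. The inequality $M\geq M_\infty$ is immediate: if $u\in X_{0}^{s}$ satisfies $\int_{\Omega}|u|^{p}\,dx=1$, then $u=0$ a.e.\ on $\mathbb{R}^{N}\setminus\Omega$ forces $\int_{\mathbb{R}^{N}}|u|^{p}\,dx=1$, and since $\|u\|_{s}=\|u\|$ the function $u$ is admissible for the infimum defining $M_\infty$; hence $\|u\|^{2}=\|u\|_{s}^{2}\geq M_\infty$, and taking the infimum over all such $u$ gives $M\geq M_\infty$.

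For the reverse inequality I would exploit that $\Omega$ is an exterior domain, so $\mathbb{R}^{N}\setminus\Omega\subset B_{\rho_{0}}(0)$ for some $\rho_{0}>0$, together with the translation invariance \eqref{P07}. Fix $\varepsilon>0$. Starting from $v$ as in Corollary~\ref{C1} (so $\|v\|_{s}^{2}=M_\infty$ and $\|v\|_{L^{p}(\mathbb{R}^{N})}=1$), and using the density of $C_{0}^{\infty}(\mathbb{R}^{N})$ in $H^{s}(\mathbb{R}^{N})$ together with the embedding $H^{s}(\mathbb{R}^{N})\hookrightarrow L^{p}(\mathbb{R}^{N})$, choose $\phi\in C_{0}^{\infty}(\mathbb{R}^{N})$ with $\|\phi\|_{s}^{2}\leq M_\infty+\varepsilon$ and $\|\phi\|_{L^{p}(\mathbb{R}^{N})}\geq 1-\varepsilon$, and let $R>0$ be such that $\operatorname{supp}\phi\subset B_{R}(0)$. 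For $z\in\mathbb{R}^{N}$ with $|z|>R+\rho_{0}$ the translate $\phi^{z}$ is supported in $B_{R}(-z)$, which is disjoint from $B_{\rho_{0}}(0)\supset\mathbb{R}^{N}\setminus\Omega$; hence $\phi^{z}\in C_{0}^{\infty}(\Omega)\subset X_{0}^{s}$, and by \eqref{P07} one has $\|\phi^{z}\|_{s}^{2}=\|\phi\|_{s}^{2}$ and $\|\phi^{z}\|_{L^{p}(\Omega)}=\|\phi^{z}\|_{L^{p}(\mathbb{R}^{N})}=\|\phi\|_{L^{p}(\mathbb{R}^{N})}$. Normalizing $w:=\phi^{z}/\|\phi^{z}\|_{L^{p}(\Omega)}\in X_{0}^{s}$ we obtain $\int_{\Omega}|w|^{p}\,dx=1$ and
\[
\|w\|^{2}=\frac{\|\phi^{z}\|_{s}^{2}}{\|\phi^{z}\|_{L^{p}(\Omega)}^{2}}\leq\frac{M_\infty+\varepsilon}{(1-\varepsilon)^{2}},
\]
so $M\leq(M_\infty+\varepsilon)/(1-\varepsilon)^{2}$; letting $\varepsilon\to 0^{+}$ yields $M\leq M_\infty$, and combined with the first step, $M=M_\infty$. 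This is the delicate step: a minimizer of $M_\infty$ need not vanish near the hole $\mathbb{R}^{N}\setminus\Omega$, so it cannot be used directly as a test function for $M$; the remedy is to first truncate to a compactly supported function at an arbitrarily small cost in energy and $L^{p}$-norm, and only then translate it far from $B_{\rho_{0}}(0)$, where translation is free by \eqref{P07}.

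For the nonexistence claim, suppose for contradiction that $u\in X_{0}^{s}$ satisfies $\|u\|^{2}=M$ and $\|u\|_{L^{p}(\mathbb{R}^{N})}=1$. Since $\|u\|_{s}=\|u\|$ and $M=M_\infty$, the function $u$ attains $M_\infty$ in $H^{s}(\mathbb{R}^{N})$; replacing $u$ by $|u|$ (which leaves $\|u\|_{L^{p}(\mathbb{R}^{N})}$ unchanged and does not increase the Gagliardo seminorm) we may assume $u\geq0$. Writing the Euler--Lagrange equation for the constrained minimization \eqref{P04}, a suitable positive multiple $Q_{0}$ of $u$ is a nonnegative, nontrivial solution of $(-\Delta)^{s}w+w=|w|^{p-2}w$ in $\mathbb{R}^{N}$; by the strong maximum principle for $(-\Delta)^{s}$ (equivalently, by the uniqueness result \cite{RFEL,RFELLS} identifying $Q_{0}$, up to a translation, with $Q$) we conclude that $u>0$ a.e.\ in $\mathbb{R}^{N}$. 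This contradicts $u=0$ on $\mathbb{R}^{N}\setminus\Omega$, a set of positive Lebesgue measure since $\partial\Omega$ is a nonempty smooth compact hypersurface (so $\Omega$ admits an exterior ball). Hence \eqref{E02} has no solution. The only nonroutine ingredient beyond the translation argument of the previous paragraph is this identification of the $M_\infty$-minimizers, which rests on the fractional strong maximum principle or on the Frank--Lenzmann(--Silvestre) uniqueness theorem already quoted in the introduction.
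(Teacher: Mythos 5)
Your proof is correct, and it differs from the paper's in the key step $M\leq M_\infty$. The paper works directly with the minimizer $\varphi$ of \eqref{P04}, multiplies by a fixed radial cutoff $\zeta$ that vanishes on $B(0,\rho)$, and translates: $\phi_n=c_n\,\zeta(\cdot)\varphi(\cdot-y_n)$ with $|y_n|\to\infty$. Showing $\|\phi_n\|_s^2\to M_\infty$ then requires a somewhat delicate estimate \eqref{E07} on the Gagliardo seminorm of $(\zeta-1)\varphi(\cdot-y_n)$, using the mean value theorem to control $\Phi_{1-\zeta}$ and a dominated-convergence argument in the double integral. You sidestep all of that by first approximating $v$ in $H^s(\mathbb{R}^N)$ by a compactly supported $\phi\in C_0^\infty(\mathbb{R}^N)$ (density plus the continuous embedding into $L^p$), and only then translating; once $\operatorname{supp}\phi^z$ is disjoint from the hole, $\phi^z\in X_0^s$ exactly, and \eqref{P07} makes the energy and $L^p$-norm invariant with no error term to control. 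This is cleaner and more elementary, at the small cost of a $\varepsilon/(1-\varepsilon)^2$ bookkeeping step; the paper's cutoff approach has the side benefit that the same function $\zeta\varphi(\cdot-y)$ is reused later in Lemma \ref{Tlm01}, but that is irrelevant to the present theorem. For the nonexistence claim you argue as the paper does: reduce to $u\geq0$ via $|u(x)|-|u(y)|\leq|u(x)-u(y)|$, identify a positive multiple of $u$ as a nonnegative nontrivial solution of the limit equation, invoke the strong maximum principle (or Frank--Lenzmann uniqueness) to get $u>0$ a.e., and contradict $u\equiv0$ on the set $\mathbb{R}^N\setminus\Omega$ of positive measure. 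This matches the paper's argument.
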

\begin{proof}
By Proposition \ref{Pprop01} - part (i) it follows that
\begin{equation}\label{E03}
M_\infty \leq M.
\end{equation}
Let $\varphi$ be a minimizer of (\ref{P04}), that is
\begin{equation}\label{E04}
\varphi \in H^s (\R^N),\;\;\int_{\R^N}|\varphi|^pdx = 1\;\;\mbox{and}\;\;M_\infty = \|\varphi\|_{s}^{2}.
\end{equation}
In addition, let
$\{y_n\}\subset \Omega$ be a sequence such that $|y_n|\to +\infty$ as $n\to +\infty$, and $\rho$ be the smallest positive number satisfying
$$
\R^N \setminus \Omega \subset B(0,\rho) = \{x\in \R^N:|x|< \rho\}.
$$
Furthermore, let us fix $\zeta \in C^\infty(\R^N, [0,1])$ defined by
$$
\zeta (x) := \xi \left(\frac{|x|}{\rho} \right),
$$
where $\xi : [0,+\infty) \to [0,1]$ is a non-decreasing function such that
$$
\xi (t) := 0, \;\;\forall t\leq 1\;\;\mbox{and}\;\;\xi (t) := 1,\;\;\forall t\geq 2.
$$
With the above notations, define
$$
\phi_n(x) := c_n \zeta(x)\varphi (x-y_n),
$$
where $c_n$ is the normalization constant given by
$$
c_n := \|\zeta(x)\varphi (x-y_n)\|_{L^p(\mathbb{R}^N)}^{-1}.
$$
We claim that
\begin{equation}\label{E05}
\|\phi_n\|^{2}_s \to M_\infty \;\;\mbox{as}\;\;n\to +\infty.
\end{equation}
Indeed, after the change of variable $z = x-y_n$, we get
$$
\begin{aligned}
\|\zeta (x) \varphi (x-y_n)  - \varphi (x-y_n)\|_{L^p(\mathbb{R}^N)} 
&= \left(\int_{\R^N}f_n(z)dz \right)^{\frac{1}{p}},
\end{aligned}
$$
where $f_n(z) = |(\zeta (x+y_n)-1)\varphi(z)|^p$. Since $|y_n| \to +\infty$ as $n\to +\infty$, it follows that
$$
f_n(z) \to 0\;\;\mbox{a.e. in}\;\;\R^N.
$$
Now, taking into account that
$$
\begin{aligned}
|f_n(z)| & = |\zeta (z+y_n) - 1|^p|\varphi (z)|^p \leq (|\zeta (z+y_n)| + 1)^p \leq 2^p|\varphi (z)|^p \in L^1(\R^N),
\end{aligned}
$$
the Lebesgue's theorem yields
$$
\int_{\R^N} f_n(z)dz \to 0\;\;\mbox{as}\;\;n\to +\infty.
$$
Therefore
$$
\zeta (\cdot +y_n) \varphi \to \varphi\;\;\mbox{in}\;\;L^p(\R^N).
$$
A similar argument ensures that
$$
c_n \to 1\;\;\mbox{as}\;\,n\to +\infty
$$
and
\begin{equation}\label{E06}
\int_{\R^N} [\zeta (x)\varphi (x-y_n) - \varphi (x-y_n)]^2dx = o_n(1).
\end{equation}
Now, we claim that
\begin{equation}\label{E07}
\int_{\R^N}\int_{\R^N}\frac{|(\zeta (x)-1)\varphi (x - y_n) - (\zeta (y) - 1)\varphi (y-y_n)|^2}{|x-y|^{N+2s}}dy dx = o_n(1).
\end{equation}
Indeed, let
$$
\Phi_u(x,y) := \frac{u(x)-u(y)}{|x-y|^{\frac{N}{2}+s}}.
$$
Then, after the change of variables $\tilde{x} = x-y_n$ and $\tilde{y} = y-y_n$, one has
$$
\begin{aligned}
&\int_{\R^N}\int_{\R^N}\frac{|(\zeta (x)-1)\varphi (x - y_n) - (\zeta (y) - 1)\varphi (y-y_n)|^2}{|x-y|^{N+2s}}dy dx
= \int_{\R^N}\int_{\R^N}|\Phi_{n}(x,y)|^2dy dx
\end{aligned}
$$
where
$$
\Phi_{n}(x,y) := \frac{(\zeta (x+y_n)-1)\varphi (x) - (\zeta (y + y_n) - 1)\varphi (y)}{|x-y|^{\frac{N}{2}+s}}.
$$
Recalling that $\displaystyle\lim_{n\to +\infty} |y_n| = +\infty$, we also have
\begin{equation}\label{E08}
\Phi_{n}(x,y)\to 0\;\;\mbox{a.e. in}\;\;\R^N \times \R^N.
\end{equation}
On the other hand, a direct application of the mean value theorem yields
\begin{equation}\label{E09}
\begin{aligned}
|\Phi_{n}(x,y)| &\leq |1-\zeta (x+y_n)||\Phi_\varphi (x,y)| + |\varphi (y)||\Phi_{1-\zeta}(x+y_n, y+y_n)|\\
&\leq  |\Phi_\varphi (x,y)| + \frac{C|\varphi (y)|}{|x-y|^{\frac{N}{2}+s -1}}\chi_{B(y,1)}(x) + \frac{2|\varphi(y)|}{|x-y|^{\frac{N}{2}+s}}\chi_{B^{c}(y,1)}(x),
\end{aligned}
\end{equation}
for almost every $(x,y)\in \R^N \times \R^N$. Now, it easily seen that the right hand side in (\ref{E09}) is $L^2$-integrable. Thus, the Lebesgue's theorem immediately yields relation (\ref{E07}). Therefore, by (\ref{E06}) and (\ref{E07}), it follows that
\begin{equation}\label{E10}
\|\zeta(\cdot)\varphi (\cdot-y_n) - \varphi(\cdot-y_n)\|_s^2 \to 0\;\;\mbox{as}\;\;n\to +\infty.
\end{equation}
Now, since $\varphi$ is a minimizer of (\ref{E02}), one has
$$
\|\zeta(\cdot)\varphi(\cdot - y_n)\|_{s}^{2} = \|\varphi(\cdot-y_n)\|_{s}^{2} + o_n(1) = \|\varphi\|_{s}^{2} + o_n(1) = M_\infty + o_n(1).
$$
Similar arguments ensures that
\begin{equation}\label{E11}
\begin{aligned}
&\|\phi_n\|_{s}^{2} = \|\phi_n\|^2 = M_\infty + o_n(1)
\end{aligned}
\end{equation}
and
\begin{equation}\label{E12}
\|\phi_n\|_{L^p(\mathbb{R}^N)} = \|c_n \zeta(\cdot)\varphi (\cdot- y_n)\|_{L^p(\mathbb{R}^N)} = c_n\|\zeta(\cdot)\varphi (\cdot - y_n)\|_{L^p(\mathbb{R}^N)} = 1.
\end{equation}
Thereby, by definition of $M$ and (\ref{E11}),
\begin{equation}\label{E13}
M \leq M_\infty.
\end{equation}
By using (\ref{E03}) and (\ref{E13}),
\begin{equation}\label{E14}
M = M_\infty,
\end{equation}
which proves the first part of the main result.\par
{Now, suppose by contradiction that there is $v_0\in X_0^s$ satisfying
$$\|v_0\| = M \;\;\mbox{and} \;\;\|v_0\|_{L^p(\Omega)} = 1.$$
Without loss of generality, we can assume that $v_0 \geq 0$ in $\Omega$. Note that by (\ref{E14}), since $v_0 \in H^{s}(\mathbb{R}^N)$ and $\|v_0\|=\|v_0\|_s$, it follows that $v_0$ is a minimizer for (\ref{P04}), and so, a solution of problem
 $$
\left\{
\begin{aligned}
(-\Delta)^{s}u +  u &= M_\infty u^{p-1}\;\;\mbox{in}\;\;\R^N\\
u&\in H^s (\R^N).
\end{aligned}
\right.
\leqno{(P_2)}
$$
Therefore, by the maximum principle we get that $v_0>0$ in $\R^N$, which is impossible, because $v_0 = 0$ in $\R^N \setminus \Omega$. This completes the proof.}
\end{proof}

\subsection{A compactness lemma}

In this section we prove a compactness result involving the energy functional $I:X_0^s \to \mathbb{R}$ associated to the main problem ($P$) and given by
$$
I(u) = \frac{1}{2}\left( \iint_{\mathcal{Q}}\frac{|u(x) - u(y)|^2}{|x-y|^{N+2s}}dy dx + \int_{\Omega} |u|^2dx\right) - \frac{1}{p}\int_{\Omega}|u|^pdx.
$$
Here and subsequently, we consider the problem
$$
\left\{
\begin{aligned}
(-\Delta)^{s}u +  u &=  |u|^{p-2}u\;\;\mbox{in}\;\;\R^N\\
u&\in H^s (\R^N),
\end{aligned}
\right.
\leqno{(P_\infty)}
$$
whose the energy functional $I_\infty: H^s (\R^N) \to \R$ is given by
$$
I_\infty(u) = \frac{1}{2}\left(\int_{\R^N}\int_{\R^N}\frac{|u(x) - u(y)|^2}{|x-y|^{N+2s}}dy dx + \int_{\R^N}|u|^2dx \right) - \frac{1}{p}\int_{\R^N}|u|^pdx.
$$
With the above notations we are abe to prove the following compactness result.
\begin{lemma}\label{Elm01}
Let $\{u_n\} \subset X_0^s$ be a sequence such that
\begin{equation}\label{E17}
I(u_n)\to c\;\;\mbox{and}\;\;I'(u_n)\to 0\;\;\mbox{as}\;\;n\to \infty.
\end{equation}
Then, up to a subsequence, {there exist a weak solution $u_0 \in X_{0}^{s}$ of $(P)$,  a number $k\in \N$, $k$ sequences $\{y_{n}^{j}\} \subset \R^N$ and $k$ functions $\{u_{n}^{j}\} \subset H^s (\R^N)$, $1\leq j \leq k$ such that}
$$
\begin{aligned}
&|y_{n}^{j}|\to +\infty \;\;\mbox{for}\;\;1\leq j \leq k,\\
&u_{n}^{0} = u_n \rightharpoonup u^0\;\;\mbox{in}\;\;X_0^s,\\
&u_{n}^{j} \rightharpoonup u^j\;\;H^s (\R^N) \;\;\mbox{for}\;\;1\leq j \leq k,
\end{aligned}
$$
where $u^j$ are nontrivial weak solution of $(P_\infty)$, for every $1\leq j \leq k$. Furthermore,
$$
\|u_n\|^2 \to \|u_0\|^2 + \sum_{j=1}^{k}\|u^j\|_{s}^{2}
$$
and
$$
I(u_n) \to I(u^0) + \sum_{j=1}^{k} I_\infty (u^j).
$$
\end{lemma}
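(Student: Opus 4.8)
The plan is to prove this by the standard iterative ``bubbling'' / Struwe-type decomposition argument, adapted to the fractional setting and to the exterior domain $\Omega$. First I would observe that condition \eqref{E17} together with the superquadratic structure of $I$ (recall $p>2$) forces $\{u_n\}$ to be bounded in $X_0^s$: from $I(u_n)-\tfrac1p I'(u_n)u_n = (\tfrac12-\tfrac1p)\|u_n\|^2$ one extracts a uniform bound, so up to a subsequence $u_n \rightharpoonup u^0$ in $X_0^s$. By the compact embedding $H^s(\R^N)\hookrightarrow L^q(\Theta)$ on bounded sets $\Theta$ (Theorem \ref{sobolev}) and a standard argument, $u^0$ is a weak solution of $(P)$, i.e. $I'(u^0)=0$. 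Setting $u_n^0:=u_n$ and $v_n^1 := u_n - u^0$, I would then show $I(u_n) = I(u^0) + I_\infty(v_n^1) + o_n(1)$ and $I'(v_n^1)\to 0$ in $(H^s(\R^N))^*$; here one must be slightly careful because $v_n^1$ need not vanish outside $\Omega$, but since $\R^N\setminus\Omega$ is bounded and $u^0$ lives in $X_0^s$, the discrepancy between $I$ and $I_\infty$ evaluated on $v_n^1$ is controlled by integrals over the bounded set $\R^N\setminus\Omega$, which tend to zero by the compact embedding. This is essentially the Brezis--Lieb lemma plus the splitting of the fractional Gagliardo seminorm over $\mathcal{Q}$ versus $\R^{2N}$.

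Next comes the inductive step. If $v_n^1\to 0$ strongly in $L^p(\R^N)$, then by the relation $I'(v_n^1)v_n^1=o_n(1)$ we get $\|v_n^1\|_s\to 0$, the process stops with $k=0$, and we are done. Otherwise, by the fractional Lions concentration-compactness lemma (Lemma \ref{CClem}) there exist $R>0$, $\delta>0$ and points $y_n^1\in\R^N$ with $\int_{B(y_n^1,R)}|v_n^1|^p\,dx\ge\delta>0$. Since $v_n^1\rightharpoonup 0$ in $H^s(\R^N)$, the compact embedding on bounded balls forces $|y_n^1|\to+\infty$. I would then set $u_n^1(x) := v_n^1(x+y_n^1) = (v_n^1)^{-y_n^1}(x)$; by translation invariance \eqref{P07} the sequence $\{u_n^1\}$ is bounded in $H^s(\R^N)$ and, up to a subsequence, $u_n^1\rightharpoonup u^1$ with $u^1\ne 0$ (the lower bound $\delta$ survives in the limit thanks to local compactness, with $R$ enlarged if needed). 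Translation invariance of $I_\infty$ and $I_\infty'$, together with the fact that $I'(v_n^1)\to 0$ in the dual and that the $\Omega$-versus-$\R^N$ discrepancy is negligible for a sequence escaping to infinity, gives $I_\infty'(u^1)=0$, so $u^1$ is a nontrivial weak solution of $(P_\infty)$. One then replaces $v_n^1$ by $v_n^2 := v_n^1 - (u^1)^{y_n^1}(x) = v_n^1(x) - u^1(x-y_n^1)$, re-runs the Brezis--Lieb splitting to obtain $I(u_n) = I(u^0) + I_\infty(u^1) + I_\infty(v_n^2) + o_n(1)$, $\|u_n\|^2 = \|u^0\|^2 + \|u^1\|_s^2 + \|v_n^2\|_s^2 + o_n(1)$, and $I'(v_n^2)\to 0$, and iterates.

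The iteration must terminate after finitely many steps: every nontrivial solution $u^j$ of $(P_\infty)$ satisfies $I_\infty(u^j) \ge \beta$ for some fixed $\beta>0$ (a uniform lower bound on the energy of nontrivial critical points of $I_\infty$, coming from the fact that $\|u^j\|_s^2 = \|u^j\|_{L^p}^p \ge M_\infty^{p/(p-2)}$ via the definition \eqref{P04} and $p>2$, whence $I_\infty(u^j) = (\tfrac12-\tfrac1p)\|u^j\|_s^2 \ge (\tfrac12-\tfrac1p)M_\infty^{p/(p-2)}$), while the energies stay nonnegative and sum to something bounded by $c + o_n(1)$; hence at most $k := \lfloor (c + o_n(1))/\beta\rfloor$ bubbles can appear, and after $k$ steps $v_n^{k+1}\to 0$ strongly in $L^p$, hence in $H^s(\R^N)$, which yields the stated energy and norm identities. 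I expect the main obstacle to be the bookkeeping that keeps the argument honest in the exterior-domain setting: one must verify repeatedly that replacing the domain $\Omega$ (and the set $\mathcal{Q}$) by all of $\R^N$ (and $\R^{2N}$) costs only $o_n(1)$, which relies essentially on $\R^N\setminus\Omega$ being bounded together with the compact embedding and the fact that the translated profiles $(u^j)^{y_n^j}$ concentrate at infinity; and the absence of any exponential-decay information on the ground state $Q$ of $(P_\infty)$ means these estimates must be carried out purely via the $L^p$ and $H^s$ norms and cutoff arguments (as in the proof of Theorem \ref{Etm01}) rather than via pointwise decay. The fractional Brezis--Lieb splitting of the Gagliardo seminorm $\iint \frac{|w(x)-w(y)|^2}{|x-y|^{N+2s}}$ under the decomposition $w = w_1 + w_2$ with $w_1$ weakly convergent to $0$ is the other technically delicate point, but it is by now standard.
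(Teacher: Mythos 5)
Your proposal is correct and follows essentially the same route as the paper: bound the sequence via the Ambrosetti--Rabinowitz identity, extract the weak limit $u^0$ solving $(P)$, subtract it off, split the functionals $I$, $I'$ into $I_\infty$, $I'_\infty$ up to $o_n(1)$ by exploiting that $\R^N\setminus\Omega$ is bounded, extract a translating concentration point, pass to the limit to get a nontrivial solution of $(P_\infty)$, and iterate until the residual vanishes, using $\|u^j\|_s^2\geq M_\infty^{p/(p-2)}$ to terminate. The one place where you diverge technically is in locating the translation sequence $\{y_n^1\}$: you invoke the fractional Lions vanishing lemma (Lemma \ref{CClem}) directly, whereas the paper follows Benci--Cerami's original device of tiling $\R^N$ by integer unit hypercubes $Q_i$, setting $d_n=\max_i\|\psi_n^1\|_{L^p(Q_i)}$, and proving $d_n\geq\gamma>0$ via a Gagliardo--Nirenberg interpolation on each cube. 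The two are logically equivalent ways to realize the non-vanishing alternative, and both are compatible with the absence of pointwise decay information; yours has the small advantage of reusing a lemma the paper has already stated, while the paper's is closer in form to the cited source and makes the localization explicit. No gap in either.
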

\begin{proof} We divide the proof of this lemma into several steps.
\begin{step}\label{Eclaim1}
The sequence $\{u_n\}$ is bounded in $X_0^s$.
\end{step}
\begin{proof}
By using the definition of $I$, we notice that
\begin{equation}\label{E18}
I(u_n) - \frac{1}{p}I'(u_n)u_n = \left(\frac{1}{2} - \frac{1}{p}\right)\|u_n\|^2.
\end{equation}
Then, by (\ref{E17}) and (\ref{E18}), one has
\begin{equation*}\label{E20}
c_1 + \frac{c_2}{p}\|u_n\| \geq I(u_n) - \frac{1}{p}I'(u_n)u_n \geq \left(\frac{1}{2} - \frac{1}{p} \right)\|u_n\|^2,
\end{equation*}
The above inequality gives the boundedness of the sequence $\{u_n\}$ in $X_0^s$.
\end{proof}

Thanks to the reflexivity of $X_0^s$, up to a subsequence, by Step 1, there exists $u_0\in X_0^s$ such that
\begin{equation}\label{E22}
u_n \rightharpoonup  u^0\;\;\mbox{in}\;\;X_0^s\quad u_n \rightharpoonup  u^0\;\;\mbox{in}\;\;L^p(\Omega) \quad  \mbox{and}\quad u_n \to u^0\;\;\mbox{a.e. in}\;\;\Omega.
\end{equation}
Moreover, standard arguments ensure that the function $u_0\in X_0^s$ weakly solves problem $(P)$.\par
 Now, let $\psi_{n}^{1}$ be the function given by
$$
\psi_{n}^{1}(x) := \begin{cases}
(u_n - u^0)(x)&x\in \Omega\\
0&x\in \R^N \setminus \Omega .
\end{cases}
$$
By using (\ref{E22}) it follows that
$$
\psi_{n}^{1}\rightharpoonup 0\;\;\mbox{in $H^s(\R^N)$ and $L^p(\R^N)$}.
$$

\indent With the above notations we are able to prove the following facts:
\begin{step}\label{Eclaim3}
\begin{equation}\label{E25}
I_\infty(\psi_n^1) = I(\psi_{n}^{1}) + o_n(1) = I(u_n) - I(u^0) + o_n(1).
\end{equation}
\end{step}
\begin{proof}
We notice that
\begin{equation}\label{E26}
\begin{aligned}
\|\psi_{n}^{1}\|_{s}^{2} 
& = \left(\iint_{\mathcal{Q}} + \iint_{\mathcal{O}} \right) \frac{|\psi_{n}^{1}(x) - \psi_{n}^{1}(y)|^2}{|x-y|^{N+2s}}dy dx + \left(\int_{\Omega} + \int_{\R^N \setminus \Omega} \right)(\psi_{n}^{1}(x))^2dx\\
&=\|\psi_{n}^{1}\|^2 = \|u_n - u^0\|^2 = \|u_n\|^2 - 2\langle u_n,u^0\rangle + \|u^0\|^2\\
& = \|u_n\|^2 - \|u^0\|^2 + o_n(1)
\end{aligned}
\end{equation}
and
\begin{equation}\label{E27}
\|u_n\|_{L^p(\mathbb{R}^N)}^{p} = \|u^0\|_{L^p(\mathbb{R}^N)}^{p} + \|\psi_{n}^{1}\|_{L^p(\mathbb{R}^N)}^{p} + o_n(1).
\end{equation}
Relations (\ref{E26}) and (\ref{E27}) immediately yields \eqref{E25}.
\end{proof}

\begin{step}\label{Eclaim4}
$$
I_\infty'(\psi_{n}^{1}) = I'(\psi_{n}^{1}) + o_n(1) = I'(u_n) - I'(u^0) +o_n(1) = o_n(1).
$$
\end{step}
\begin{proof}
For each $v\in X_0^s$ with $\|v\| \leq 1$, one has
$$
I'(\psi_{n}^{1}) v = \iint_{\mathcal{Q}}\frac{[\psi_{n}^{1}(x) - \psi_{n}^{1}(y)][v(x)- v(y)]}{|x-y|^{N+2s}}dy dx + \int_{\Omega} \psi_{n}^{1}vdx - \int_{\Omega}|\psi_{n}^{1}|^{p-2}\psi_{n}^{1}vdx
$$
and
$$
\begin{aligned}
{I}'_{\infty}(\psi_{n}^{1})v &= \iint_{\R^{2N}} \frac{[\psi_{n}^{1}(x) - \psi_{n}^{1}(y)][v(x) - v(y)]}{|x-y|^{N+2s}}dy dx + \int_{\R^N} \psi_{n}^{1}vdx - \int_{\R^N} |\psi_{n}^{1}|^{p-2}\psi_{n}^{1}vdx\\
&=\iint_{\mathcal{Q}}\frac{[\psi_{n}^{1}(x) - \psi_{n}^{1}(y)][v(x) - v(y)]}{|x-y|^{N+2s}}dy dx + \int_{\Omega} \psi_{n}^{1}vdx - \int_{\Omega}|\psi_{n}^{1}|^{p-2}\psi_{n}^{1}vdx.
\end{aligned}
$$
Then
$$
|\langle I'(\psi_{n}^{1}) - I'_{\infty}(\psi_{n}^{1}), v \rangle| = \left| \int_{\mathbb{R}^N \setminus \Omega}\psi_{n}^{1}vdx - \int_{\mathbb{R}^N \setminus  \Omega}|\psi_{n}^{1}|^{p-2}\psi_{n}^{1} v\right|=o_n(1),
$$
for every $v\in X_0^s$ with $\|v\| \leq 1$. Consequently, it follows that
\begin{equation}\label{E41}
I'_{\infty}(\psi_{n}^{1}) = I'(\psi_{n}^{1}) + o_n(1).
\end{equation}
Now, we are going to show that
\begin{equation}\label{E42}
I'(\psi_{n}^{1}) = I'(u_n) - I'(u^0) + o_n(1) = o_n(1).
\end{equation}
Indeed, by definition of $\psi_{n}^{1}$, it is easy to see that
\begin{equation}\label{E43}
\begin{aligned}
&\langle I'(\psi_{n}^{1}) - I'(u_n) + I'(u^0), v\rangle\\
 & = \iint_{\mathcal{Q}} \frac{[\psi_{n}^{1}(x) - \psi_{n}^{1}(y)][v(x) -v(y)]}{|x-y|^{N+2s}}dy dx + \int_{\Omega} \psi_{n}^{1}vdx - \int_{\Omega}|\psi_{n}^{1}|^{p-2}\psi_{n}^{1}vdx\\
&-\iint_{\mathcal{Q}}\frac{[u_n(x) - u_n(y)][v(x) - v(y)]}{|x-y|^{N+2s}}dy dx - \int_{\Omega} u_nvdx + \int_{\Omega}|u_n|^{p-2}u_nvdx\\
&+\iint_{\mathcal{Q}}\frac{[u^0(x) - u^0(y)][v(x)-v(y)]}{|x-y|^{N+2s}}dy dx + \int_{\Omega} u^0vdx - \int_{\Omega}|u^0|^{p-2}u^0vdx\\
&=\int_{\Omega}( |u_n|^{p-2}u_n -|u^0|^{p-2}u^0 - |\psi_{n}^{1}|^{p-2}\psi_{n}^{1}) v \,dx.
\end{aligned}
\end{equation}
On the other hand, bearing in mind that
\begin{equation}\label{E44}
\left( \int_{\Omega}\left||u_n|^{p-2}u_n - |u^0|^{p-2}u^0 - |\psi_{n}^{1}|^{p-2}\psi_{n}^{1}\right|^{\frac{p}{p-1}}dx \right)^{\frac{p-1}{p}} = o_n(1),
\end{equation}
relation \eqref{E43} directly yields \eqref{E42}.
\end{proof}

\indent
If $\psi_{n}^{1} \to 0$ in $X_0^s$ the statements of the main result are verified.
Thus, we can suppose that
\begin{equation}\label{E46}
\psi_{n}^{1} \not \to 0\;\;\mbox{in}\;\;X_0^s.
\end{equation}
By using the fact that
$$
I_\infty(\psi_{n}^{1}) = \frac{1}{2}\|\psi_{n}^{1}\|_s^2 - \frac{1}{p}\int_{\R^N}|\psi_{n}^{1}|^pdx,
$$
and $I'_{\infty}(\psi_{n}^{1}) = o_n(1)$, we have
\begin{equation}\label{E45}
I'_{\infty}(\psi_{n}^{1})\psi_{n}^{1} = \|\psi_{n}^{1}\|_{s}^{2} - \int_{\R^N} |\psi_{n}^{1}|^pdx = o_n(1).
\end{equation}
Then
$$
I_{\infty}(\psi_{n}^{1}) = \frac{1}{2}\|\psi_{n}^{1}\|_{s}^{2} - \frac{1}{p}\|\psi_{n}^{1}\|_{s}^{2} +o_n(1) = \left(\frac{1}{2} - \frac{1}{p}\right)\|\psi_{n}^{1}\|_{s}^{2} + o_n(1).
$$
By (\ref{E46}), there is $\alpha >0$ such that
\begin{equation}\label{E47}
I_{\infty}(\psi_{n}^{1})\geq \alpha >0.
\end{equation}
Now, let us decompose $\R^N$ into $N$-dimensional unit hypercubes $Q_i$ whose vertices have integer coordinates and put
\begin{equation}\label{E48}
d_n = \max_{i}\|\psi_{n}^{1}\|_{L^p(Q_i)}.
\end{equation}
Arguing as in \cite{VBGC}, there is $\gamma >0$ such that
\begin{equation}\label{E49}
d_n \geq \gamma >0.
\end{equation}
Denote by $\{y_n^1\}$ the center of a hypercube $Q_i$ in which $\|\psi_{n}^{1}\|_{L^p(Q_i)} = d_n$ and let us prove that this sequence is unbounded in $\R^N$. Arguing by contradiction, suppose that the sequence $\{y_n^1\}$ is bounded in $\R^N$. Then, there is $R>0$ such that
\begin{equation}\label{E50}
\int_{B(0,R)}|\psi_{n}^{1}|^pdx \geq \int_{Q_i(y_n^1)}|\psi_{n}^{1}|^pdx = d_{n}^{p} > \gamma^p >0.
\end{equation}
On the other hand, since $\psi_{n}^{1} \rightharpoonup  0$ in $H^s(\R^N)$, Lemma \ref{sobolev} gives 
$$
\int_{B(0,R)}|\psi_{n}^{1} |^pdx \to 0,\;\;\mbox{as}\;\;n\to +\infty,
$$
against (\ref{E50}). Therefore, the sequence $\{y_n^1\}$ is an unbounded. Since
$$
\|\psi_{n}^{1} (\cdot + y_{n}^{1})\|_{s} = \|\psi_{n}^{1} \|_s \quad \forall n \in \mathbb{N},
$$
we deduce that $\{\psi_{n}^{1} (\cdot + y_n^1)\}$ is a bounded sequence in $H^s(\R^N)$. Then, there is $u^1\in H^s(\R^N)$ such that
$$
\psi_{n}^{1} (\cdot + y_n^1) \rightharpoonup  u^1\;\;\mbox{in}\;\;H^s(\R^N)\\
$$
and
$$
\psi_{n}^{1} (\cdot + y_n^1) \to u^1\;\;\mbox{in}\;\;L_{loc}^{p}(\R^N).
$$
\begin{step}\label{Eclaim5}
$u^1$ is a nontrivial weak solution of $(P_\infty)$.
\end{step}
\begin{proof}
First of all, by (\ref{E50}), we derive that $u^1\neq 0$, and by a straightforward computation
\begin{equation*}\label{E52}
I'_{\infty}(\psi_{n}^{1}(\cdot + y_n^1))v = o_n(1),\;\;\forall v\in C_{0}^{\infty}(\R^N).
\end{equation*}
Then, taking the limit of $n \to +\infty$, we find
\begin{equation*}\label{E53}
\int_{\R^N}\int_{\R^N}\frac{[u^1(x) -u^1(y)][v(x) - v(y)]}{|x-y|^{N+2s}}dydx +\int_{\R^N} u^1vdx = \int_{\R^N} |u^1|^{p-2}u^1vdx,\;\;\forall v\in C_{0}^{\infty}(\Omega).
\end{equation*}
Now, the density of  $C_{0}^{\infty}(\R^N)$ in $H^s(\R^N)$ gives
$$
\int_{\R^N}\int_{\R^N}\frac{[u^1(x) -u^1(y)][w(x) - w(y)]}{|x-y|^{N+2s}}dydx +\int_{\R^N} u^1wdx = \int_{\R^N} |u^1|^{p-2}u^1wdx,\;\;\forall w\in H^s(\R^N),
$$
from where it follows that $u^1$ is a nontrivial solution of ($P_\infty$).
\end{proof}
We can repeat this process to obtain the sequence
$$
\psi_{n}^{j}(x) = \psi_{n}^{j-1}(x + y_n^{j-1})-u^{j-1}(x),\;\;j\geq 2,
$$
where
$$
|y_n^j|\to +\infty,\;\;\mbox{as}\;\;n\to +\infty
$$
and
\begin{equation}\label{E54}
\psi_{n}^{j-1}(x + y_n^{j-1}) \rightharpoonup u^{j-1}\;\;\mbox{in}\;\;H^s(\R^N),
\end{equation}
where each $u^j$ is a nontrivial solution of ($P_\infty$). Now, by induction, we have the following equalities
\begin{equation}\label{E55}
\begin{aligned}
\|\psi_{n}^{j}\|_{s}^{2} &= \|\psi_{n}^{j-1}\|_{s}^{2} - \|u^{j-1}\|_{s}^{2} + o_n(1) = \|u_n\|^2 - \|u^0\|^2 - \sum_{i=1}^{j-1}\|u^i\|_{s}^{2} + o_n(1)
\end{aligned}
\end{equation}
and
\begin{equation}\label{E56}
\begin{aligned}
I_{\infty}(\psi_{n}^{j}) &= I_{\infty}(\psi_{n}^{j-1}) - I_{\infty}(u^{j-1}) + o_n(1)=I(u_n) - I(u^0) - \sum_{i=1}^{j-1}\tilde{I}(u^i) + o_n(1).
\end{aligned}
\end{equation}

Since $u^j$ is a nontrivial solution of ($P_\infty$), it follows that
\begin{equation}\label{E58}
\|u^j\|_{s}^{2} \geq M_{\infty}^{\frac{p}{p-2}},
\end{equation}
for every $1\leq j\leq k$.
Now, arguing as in \cite{VBGC}, we deduce that above argument will stop after a finite number of steps.

\end{proof}

\begin{corollary}\label{Ecor01}
Let $\{u_n\}$ be as in Lemma \ref{Elm01} and let
\begin{equation}\label{E59}
c< \left(\frac{1}{2}-\frac{1}{p} \right)M_\infty^{\frac{p}{p-2}}.
\end{equation}
Then $\{u_n\}$ admits a strongly convergent subsequence. Hence,  the functional $I$ verifies the $(PS)_c$ condition, for every $$c \in \left(0,\left(\frac{1}{2}-\frac{1}{p} \right)M_\infty^{\frac{p}{p-2}} \right).$$
\end{corollary}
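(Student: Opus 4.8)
The plan is to deduce everything from the profile decomposition of Lemma~\ref{Elm01}, using the energy gap provided by estimate~\eqref{E58} to rule out the appearance of any nontrivial bubble. First I would apply Lemma~\ref{Elm01} to $\{u_n\}$: after passing to a subsequence, there are a weak solution $u^0\in X_0^s$ of $(P)$, an integer $k\in\N$, sequences $\{y_n^j\}\subset\R^N$ with $|y_n^j|\to+\infty$, and nontrivial weak solutions $u^j\in H^s(\R^N)$ of $(P_\infty)$, $1\le j\le k$, such that
$$
I(u_n)\to I(u^0)+\sum_{j=1}^{k}I_\infty(u^j).
$$

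Next I would evaluate each energy term. Since $u^0$ weakly solves $(P)$, testing the weak formulation with $v=u^0$ gives $\|u^0\|^2=\int_\Omega|u^0|^p\,dx$, whence $I(u^0)=\left(\frac{1}{2}-\frac{1}{p}\right)\|u^0\|^2\ge 0$. In the same way, testing $(P_\infty)$ with $u^j$ yields $\|u^j\|_s^2=\int_{\R^N}|u^j|^p\,dx$, so $I_\infty(u^j)=\left(\frac{1}{2}-\frac{1}{p}\right)\|u^j\|_s^2$; combining this with \eqref{E58} we obtain $I_\infty(u^j)\ge\left(\frac{1}{2}-\frac{1}{p}\right)M_\infty^{\frac{p}{p-2}}$ for every $1\le j\le k$. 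Therefore
$$
c=I(u^0)+\sum_{j=1}^{k}I_\infty(u^j)\ge k\left(\frac{1}{2}-\frac{1}{p}\right)M_\infty^{\frac{p}{p-2}}.
$$

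If $k\ge 1$, the last inequality contradicts the hypothesis \eqref{E59}; hence $k=0$. But $k=0$ is exactly the case distinguished in the proof of Lemma~\ref{Elm01} in which $\psi_n^1\to 0$ in $X_0^s$, i.e. $u_n\to u^0$ strongly in $X_0^s$ along the extracted subsequence. This proves the first assertion. For the ``hence'' part, any sequence satisfying \eqref{E17} at a level $c\in\left(0,\left(\frac{1}{2}-\frac{1}{p}\right)M_\infty^{\frac{p}{p-2}}\right)$ in particular satisfies $c<\left(\frac{1}{2}-\frac{1}{p}\right)M_\infty^{\frac{p}{p-2}}$, so by the first part it has a strongly convergent subsequence, which is precisely the $(PS)_c$ condition.

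I do not expect a genuine obstacle here: the argument is a routine consequence of Lemma~\ref{Elm01}. The only points requiring care are the bookkeeping identification ``$k=0$ if and only if $\psi_n^1\to 0$ in $X_0^s$'', which is built into the way Lemma~\ref{Elm01} was proved, and the sign estimate $I(u^0)\ge 0$, which uses that $u^0$ is an actual weak solution of $(P)$ (as asserted in the lemma) and not merely the weak limit of $\{u_n\}$.
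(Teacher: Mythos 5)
Your proof is correct and follows essentially the same route as the paper: apply the profile decomposition of Lemma~\ref{Elm01}, use the energy lower bound \eqref{E58} on each bubble together with $I(u^0)\ge 0$ to show $k=0$ whenever $c<(\tfrac12-\tfrac1p)M_\infty^{p/(p-2)}$, and conclude strong convergence. The paper states this more tersely (asserting directly that $k\ge 1$ would force $c\ge(\tfrac12-\tfrac1p)M_\infty^{p/(p-2)}$), while you usefully spell out the intermediate identities $I(u^0)=(\tfrac12-\tfrac1p)\|u^0\|^2$ and $I_\infty(u^j)=(\tfrac12-\tfrac1p)\|u^j\|_s^2$ from testing the weak formulations.
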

\begin{proof}
By our hypotheses, one has
$$
I(u_n)\to c\;\;\mbox{and}\;\;I'(u_n)\to 0\;\;\mbox{as}\;\;n\to +\infty,
$$
where $c$ satisfies (\ref{E59}). Without loss of generality, we can suppose that $\{u_n\}$ is bounded in $X_0^s$. Then, up to some subsequence,  there exists $u^0\in X_0^s$ such that
$$
u_n \rightharpoonup u^0\;\;\mbox{in}\;\;X_0^s.
$$
If $u \not \to u^0$ in $X_0^s$, by Lemma \ref{Elm01} we must have $k \geq 1$. Hence,
$$
I(u_n)\to c \geq \left( \frac{1}{2}-\frac{1}{p}\right)M_{\infty}^{\frac{p}{p-2}},
$$
which is a contradiction with (\ref{E59}). Therefore
$$
u_n \rightharpoonup u^0\;\;\mbox{and}\;\;\|u_n\|^2 \to \|u^0\|^2,
$$
which implies that $u_n \to u^0$ in $X_0^s$.
\end{proof}

\begin{corollary}\label{Ecor02}
Let $\mathcal{P}$ the set of nonnegative functions in $X_0^s$. Assume that there is $\{u_n\} \subset \mathcal{P}$ that satisfies the assumptions of Lemma \ref{Elm01}. If
\begin{equation}\label{E62}
\left( \frac{1}{2} - \frac{1}{p}\right)M^{\frac{p}{p-2}} < c < 2\left( \frac{1}{2} - \frac{1}{p}\right) M^{\frac{p}{p-2}},
\end{equation}
then $\{u_n\}$ has a strongly convergent subsequence. Hence, the energy functional $I$ satisfies the $(PS)_c$ condition, for every $$c \in \left( \left( \frac{1}{2} - \frac{1}{p}\right)M^{\frac{p}{p-2}}, 2\left( \frac{1}{2} - \frac{1}{p}\right) M^{\frac{p}{p-2}} \right).$$
\end{corollary}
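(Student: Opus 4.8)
The plan is to argue by contradiction, combining the profile decomposition of Lemma \ref{Elm01} with the nonnegativity of the sequence and the uniqueness of the positive solution of $(P_\infty)$; it is precisely this last ingredient, unavailable for sign-changing sequences, that allows one to cover the energy window in $(\ref{E62})$ rather than only the lower level already treated in Corollary \ref{Ecor01}.

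First I would assume that $\{u_n\}$ has no subsequence converging strongly in $X_0^s$. Applying Lemma \ref{Elm01} to $\{u_n\}\subset\mathcal P$, along a subsequence one obtains a weak solution $u^0\in X_0^s$ of $(P)$ — nonnegative, being an a.e.\ limit of the $u_n$ — an integer $k\ge 1$, translations with $|y_n^j|\to+\infty$, and nontrivial weak solutions $u^1,\dots,u^k$ of $(P_\infty)$ with
$$
I(u_n)\to I(u^0)+\sum_{j=1}^{k} I_\infty(u^j).
$$
Testing $(P)$ with $u^0$ and each $(P_\infty)$ with $u^j$, and using $M=M_\infty$ from Theorem \ref{Etm01} together with $(\ref{E58})$, I would record the energy estimates $I(u^0)=\left(\tfrac12-\tfrac1p\right)\|u^0\|^2\ge 0$ and $I_\infty(u^j)=\left(\tfrac12-\tfrac1p\right)\|u^j\|_s^2\ge\left(\tfrac12-\tfrac1p\right)M^{p/(p-2)}$ for $1\le j\le k$; moreover, when $u^0\neq 0$, normalizing $u^0$ in the definition $(\ref{E02})$ of $M$ sharpens the first estimate to $I(u^0)\ge\left(\tfrac12-\tfrac1p\right)M^{p/(p-2)}$.

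Then I would run a short case analysis on the displayed limit. If $k\ge 2$, or if $k=1$ and $u^0\neq 0$, then $c\ge 2\left(\tfrac12-\tfrac1p\right)M^{p/(p-2)}$, contradicting the upper bound in $(\ref{E62})$. The only surviving possibility is $k=1$ and $u^0=0$, so that $c=I_\infty(u^1)$, and excluding this is the hard part — the step where the hypothesis $\{u_n\}\subset\mathcal P$ is indispensable. Since $u^0=0$ we have $\psi_n^1=u_n\ge 0$, hence the limit profile $u^1$ is a \emph{nonnegative} nontrivial $H^s$-solution of $(P_\infty)$; by the uniqueness, up to translations, of the positive solution of $(P_\infty)$ (Frank--Lenzmann \cite{RFEL}, Frank--Lenzmann--Silvestre \cite{RFELLS}), $u^1$ is a translate of the ground state, so $\|u^1\|_s^2=M_\infty^{p/(p-2)}$ and therefore $c=I_\infty(u^1)=\left(\tfrac12-\tfrac1p\right)M^{p/(p-2)}$, contradicting the \emph{strict} lower bound in $(\ref{E62})$. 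Having ruled out every case, $\{u_n\}$ admits a strongly convergent subsequence, whose limit again lies in $\mathcal P$; as the argument did not use any special feature of the sequence, $I$ satisfies the $(PS)_c$ condition on $\mathcal P$ for every $c$ in the interval of $(\ref{E62})$.
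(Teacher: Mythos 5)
Your proposal is correct and follows essentially the same route as the paper: ruling out $k\ge 2$ and $k=1$ with $u^0\neq 0$ via the lower bound $2\left(\tfrac12-\tfrac1p\right)M^{p/(p-2)}$, and excluding $k=1$ with $u^0=0$ by identifying $u^1$ (nonnegative since the $u_n$ are nonnegative) with the unique positive ground state of $(P_\infty)$, which forces $c=\left(\tfrac12-\tfrac1p\right)M^{p/(p-2)}$ and contradicts the strict lower bound. The only difference is cosmetic grouping of cases.
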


\begin{proof}
As in Corollary \ref{Ecor01}, there is $u^0\in X_0^s$ such that $u_n \rightharpoonup u^0$ in $X_0^s$. Assume that $u_n \not \to u^0$ in $X_0^s$, then we must have $k\geq 1$ in Lemma \ref{Elm01}.  As $M_\infty = M$, for $k\geq 2$ we get
$$
I(u_n)\to c \geq \left( \frac{1}{2}-\frac{1}{p}\right)[M_{\infty}^{\frac{p}{p-2}} + M^{\frac{p}{p-2}}]=2\left( \frac{1}{2} - \frac{1}{p}\right) M^{\frac{p}{p-2}},
$$
which is impossible, consequently $k$ cannot be greater than $1$.  If $u^0 = 0$, we have that
$u^1$ is a positive ground state solution of $(P_\infty)$, which is unique and satisfies
$$
I_{\infty}(u^1) = \left(\frac{1}{2}-\frac{1}{p}\right)M_{\infty}^{\frac{p}{p-2}}.
$$
Then
\begin{equation}\label{E63}
I(u_n) = I(u^0) + I_{\infty}(u^1) + o_n(1) = \left(\frac{1}{2}-\frac{1}{p} \right)M_{\infty}^{\frac{p}{p-2}} + o_n(1)=\left(\frac{1}{2}-\frac{1}{p} \right)M^{\frac{p}{p-2}} + o_n(1),
\end{equation}
contrary to (\ref{E62}). Thereby $u^0\neq 0$, and we must have
$$
I(u^0) \geq \left(\frac{1}{2}-\frac{1}{p} \right)M^{\frac{p}{p-2}} \quad \mbox{and} \quad I_\infty(u^1) \geq \left(\frac{1}{2}-\frac{1}{p}\right)M_{\infty}^{\frac{p}{p-2}},
$$
leading to
$$
I(u_n) \geq \left(\frac{1}{2}-\frac{1}{p} \right)[M_{\infty}^{\frac{p}{p-2}} + M^{\frac{p}{p-2}}] + o_n(1)=2\left( \frac{1}{2} - \frac{1}{p}\right) M^{\frac{p}{p-2}}+ o_n(1),
$$
which contradicts (\ref{E62}). From this, we cannot have $k = 1$, and so,
$$
\|u_n\|^{2} \to \|u^0\|^{2}\;\;\mbox{as}\;\;n\to +\infty,
$$
that is, $u_n \to u^0$ in $X_0^s$ and $u^0\neq 0$.
\end{proof}

In the sequel, let us consider the set
$$
\mathcal{V} := \left\{u\in X_0^s:\;\;\int_{\Omega}|u|^pdx = 1 \right\}
$$
and the functional $J:X_0^s \to \R$ defined by
\begin{equation}\label{fun01}
J(u) := \|u\|^2.
\end{equation}
Moreover, we consider the norm
$$
\|J'(u)\|_{*} = \sup_{w\in T_u\mathcal{V}, \|w\|\leq 1} |\langle J'(u), w\rangle|
$$
where
$$
T_u\mathcal{V} := \{v\in X_0^s:\;\;G'(u)v = 0\}
$$
and $G: X_0^s\to \R$ is the functional given by
\begin{equation}\label{fun02}
G(u) := \int_{\Omega}|u|^pdx.
\end{equation}
\begin{corollary}\label{Ecor03}
$J$ satisfies the Palais-Smale condition in
$$
\mathcal{Z} := \mathcal{P}\cap \mathcal{V} \cap \{u\in X_0:\;\;M < J(u) < 2^{\frac{p-2}{p}}M \}.
$$
\end{corollary}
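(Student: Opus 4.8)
The plan is to reduce the constrained Palais--Smale condition for $J$ on $\mathcal{V}$ to the free Palais--Smale condition for the energy functional $I$ on $X_0^s$, and then to invoke Corollary~\ref{Ecor02}. So fix $d\in\bigl(M,2^{\frac{p-2}{p}}M\bigr)$ and let $\{u_n\}\subset\mathcal{Z}$ be a sequence with $J(u_n)\to d$ and $\|J'(u_n)\|_*\to 0$. Since $p\in(2,2_s^*)$, the functional $G$ is of class $C^1$ on $X_0^s$ and $G'(u)u=p\int_\Omega|u|^p\,dx=p\neq 0$ for every $u\in\mathcal{V}$, so $\mathcal{V}=G^{-1}(1)$ is a $C^1$ submanifold of $X_0^s$ on which $\|J'(u)\|_*=\inf_{\mu\in\R}\|J'(u)-\mu G'(u)\|_{(X_0^s)^*}$. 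Hence there is $\{\mu_n\}\subset\R$ with $J'(u_n)-\mu_n G'(u_n)\to 0$ in $(X_0^s)^*$; testing this relation with $u_n$ and using $J'(u)u=2\|u\|^2$ and $\|u_n\|^2=J(u_n)\to d$, one gets $\mu_n=\tfrac2p\|u_n\|^2+o_n(1)\to\tfrac{2d}p>0$. In particular $\{\mu_n\}$ is bounded and bounded away from $0$, the latter because $\|u_n\|^2>M>0$ on $\mathcal{Z}$.

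Next I would rescale: put $t_n:=\bigl(\tfrac p2\mu_n\bigr)^{\frac1{p-2}}$ and $v_n:=t_n u_n$. By the previous step $t_n\to d^{\frac1{p-2}}>0$, so $\{t_n\}$ lies in a compact subinterval of $(0,\infty)$, and $v_n\in\mathcal{P}$ since $u_n\ge 0$ and $t_n>0$. Using $J'(u)w=2\langle u,w\rangle$, $G'(u)w=p\int_\Omega|u|^{p-2}uw\,dx$ and the identity $\tfrac p2 t_n\mu_n=t_n^{p-1}$, one checks that for every $w\in X_0^s$
$$I'(v_n)w=t_n\langle u_n,w\rangle-t_n^{p-1}\int_\Omega|u_n|^{p-2}u_nw\,dx=\frac{t_n}{2}\bigl(J'(u_n)-\mu_n G'(u_n)\bigr)w,$$
whence $\|I'(v_n)\|_{(X_0^s)^*}\le\tfrac{t_n}{2}\|J'(u_n)-\mu_n G'(u_n)\|_{(X_0^s)^*}\to 0$. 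Moreover, passing to a subsequence if necessary,
$$I(v_n)=\frac{t_n^2}{2}\|u_n\|^2-\frac{t_n^p}{p}\longrightarrow\Bigl(\frac12-\frac1p\Bigr)d^{\frac p{p-2}}=:c,$$
since $\tfrac2{p-2}+1=\tfrac p{p-2}$.

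Finally, $d\in\bigl(M,2^{\frac{p-2}{p}}M\bigr)$ gives $d^{\frac p{p-2}}\in\bigl(M^{\frac p{p-2}},2M^{\frac p{p-2}}\bigr)$, hence $c\in\bigl(\bigl(\tfrac12-\tfrac1p\bigr)M^{\frac p{p-2}},\,2\bigl(\tfrac12-\tfrac1p\bigr)M^{\frac p{p-2}}\bigr)$, which is exactly the admissible range in Corollary~\ref{Ecor02}. Since $\{v_n\}\subset\mathcal{P}$, $I(v_n)\to c$ and $I'(v_n)\to 0$, Corollary~\ref{Ecor02} yields a subsequence $v_{n_k}\to v^0$ strongly in $X_0^s$; and as $t_{n_k}\to d^{\frac1{p-2}}>0$, we conclude $u_{n_k}=t_{n_k}^{-1}v_{n_k}\to d^{-\frac1{p-2}}v^0$ strongly in $X_0^s$, which is the desired conclusion (the limit lies in $\mathcal{Z}$ by continuity of $J$, $G$ and nonnegativity of the $u_{n_k}$).

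The main obstacle is the reduction step: one must justify that $\mathcal{V}$ is a genuine $C^1$ manifold on which $\|J'(u)\|_*$ equals $\inf_\mu\|J'(u)-\mu G'(u)\|$ (this uses $G'(u)u\neq 0$), and that the multiplier $\mu_n$, hence the rescaling factor $t_n$, stays in a fixed compact subinterval of $(0,\infty)$ — which is precisely where the constraint $M<J(u)$ built into $\mathcal{Z}$ is used. A secondary point is that the argument genuinely requires the level $d$ to lie in the \emph{open} interval $(M,2^{\frac{p-2}{p}}M)$: at the endpoint $d=M$ the normalized concentrating functions $\phi_n$ constructed in the proof of Theorem~\ref{Etm01} already belong to $\mathcal{Z}$ (because $M$ is not attained in $X_0^s$) and form a non-precompact Palais--Smale sequence for $J|_{\mathcal{V}}$, so the statement should be understood as ``$J$ satisfies $(PS)_d$ on $\mathcal{Z}$ for every $d$ in the interval defining $\mathcal{Z}$''.
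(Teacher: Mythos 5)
Your proof is correct and follows essentially the same strategy as the paper: characterize the constrained derivative via a Lagrange multiplier (Willem's Prop.~5.12), rescale so that the sequence becomes a Palais--Smale sequence for the free functional $I$ at a level in the interval covered by Corollary~\ref{Ecor02}, and pull the compactness back. The only cosmetic difference is that you rescale by the sequence-dependent factor $t_n=(\tfrac p2\mu_n)^{1/(p-2)}$ (which makes the identity $I'(v_n)=\tfrac{t_n}{2}\bigl(J'(u_n)-\mu_n G'(u_n)\bigr)$ exact), whereas the paper uses the fixed factor $c^{1/(p-2)}$ and absorbs the discrepancy $\mu_n\to\tfrac{2c}{p}$ into the ``straightforward computation''; your version simply makes explicit the multiplier identification $\mu_n=\tfrac2p\|u_n\|^2+o_n(1)$ that the paper leaves implicit.
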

\begin{proof}
Let $\{u_n\} \subset \mathcal{Z}$  be a sequence  satisfying
\begin{equation}\label{E64}
J(u_n) \to c \in \left(M, 2^{\frac{p-2}{p}}M \right)\;\;\mbox{and}\;\; \|J'(u_n)\|_{*} \to 0,\;\;\mbox{as}\;\;n\to +\infty.
\end{equation}
Setting $v_n = c^{\frac{1}{p-2}}u_n$ and
$$
d = \left( \frac{1}{2}-\frac{1}{p}\right)c^{\frac{p}{p-2}} \in  \left( \left( \frac{1}{2}-\frac{1}{p}\right) M^{\frac{p}{p-2}}, 2\left( \frac{1}{2}-\frac{1}{p}\right)M^{\frac{p}{p-2}} \right),
$$
we derive that
$$
\begin{aligned}
I(v_n) & = \frac{1}{2}\left(\iint_{\mathcal{Q}}\frac{|v_n(x) - v_n(y)|^2}{|x-y|^{N+2s}}dy dx + \int_{\Omega} v_n^2(x)dx \right) - \frac{1}{p}\int_{\Omega}|v_n|^pdx\\
& = \frac{c^{\frac{2}{p-2}}}{2}\|u_n\|^2 - \frac{c^{\frac{p}{p-2}}}{2} \to d.
\end{aligned}
$$
Now we claim that $I'(v_n) \to 0$ as $n\to +\infty$. Indeed, by Proposition 5.12 in \cite{MW},
$$
\|J'(u_n)\|_{*} = \min_{\lambda \in \R}\|J'(u_n) - \lambda G'(u_n)\| = \|J'(u_n) - \lambda_nG'(u_n)\|.
$$
Hence, by (\ref{E64}),
\begin{equation}\label{E65}
J'(u_n) - \lambda_nG'(u_n) = o_n(1)\;\;\mbox{in}\;\;(X_0^s)^*.
\end{equation}
Therefore, by a straightforward  computation,
$$
I'(v_n) = o_n(1)\;\;\mbox{in}\;\;(X_0)^*.
$$
Using the above limit, we can apply Corollary \ref{Ecor02} to deduce that $\{v_n\}$ has a subsequence convergent, which implies that $\{u_n\}$ has a subsequence convergent and so $J$ satisfies the (PS) condition in $\mathcal{Z}$.
\end{proof}

\section{Proof of some estimates}

We start this section by introducing  the following operator
$$
\begin{aligned}
\phi_\rho : \R^N &\to H^s(\R^N)\\
y&\mapsto\phi_\rho (y) = \frac{v_{y}^{\rho}(x)}{\|v_{y}^{\rho}\|_{L^p(\R^N)}} ,
\end{aligned}
$$
where
$$
v_{y}^{\rho}(x) := \zeta(x)\varphi(x-y) = \xi\left(\frac{|x|}{\rho}\right)\varphi (x-y),
$$
and $\zeta, \xi, \varphi$ given as in the proof of Theorem \ref{Etm01}. A direct computation ensures that the functions $\phi_\rho, v_{y}^{\rho}$ belong to $X_0^s$ and $L^p(\Omega)$.

\begin{lemma}\label{Tlm01}
The following relations hold:
\begin{itemize}
\item[$(i)$] For every $y\in\R^N$
$$\phi_\rho(y) \to \varphi (\cdot - y)$$
in $H^s(\R^N)$, as $\rho \to 0$$;$
\item[$(ii)$] $\|\phi_\rho\|_{s}^{2} \to M$ for each $\rho$, as $|y|\to +\infty$.
\end{itemize}
\end{lemma}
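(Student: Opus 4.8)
Here is a plan.

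The plan is to treat the two items separately. Item $(ii)$ will be a re-reading of the estimates already obtained in the proof of Theorem~\ref{Etm01}, whereas item $(i)$ needs a short new computation whose heart is the fact that a single point has zero $H^s$-capacity because $N>2s$.

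\emph{Item $(i)$.} I would first reduce it to the claim that $v_y^\rho=\zeta\,\varphi(\cdot-y)\to\varphi(\cdot-y)$ in $H^s(\R^N)$ as $\rho\to0$. Granted this, the continuous embedding $H^s(\R^N)\hookrightarrow L^p(\R^N)$ of Theorem~\ref{sobolev} gives $\|v_y^\rho\|_{L^p(\R^N)}\to\|\varphi(\cdot-y)\|_{L^p(\R^N)}=\|\varphi\|_{L^p(\R^N)}=1$, and hence $\phi_\rho(y)=v_y^\rho/\|v_y^\rho\|_{L^p(\R^N)}\to\varphi(\cdot-y)$ in $H^s(\R^N)$, which is $(i)$. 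To prove $\|(1-\zeta)\varphi(\cdot-y)\|_s\to0$ I would split the norm. Since $0\le 1-\zeta\le1$ and $1-\zeta$ vanishes outside $B(0,2\rho)$, the $L^2$-part satisfies
$$
\int_{\R^N}\big|(1-\zeta(x))\varphi(x-y)\big|^2\,dx\le\int_{B(0,2\rho)}|\varphi(x-y)|^2\,dx\longrightarrow0\qquad(\rho\to0),
$$
by absolute continuity of the Lebesgue integral.

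For the Gagliardo part I would write $a(x)b(x)-a(z)b(z)=a(x)\big(b(x)-b(z)\big)+b(z)\big(a(x)-a(z)\big)$ with $a=1-\zeta$, $b=\varphi(\cdot-y)$, and use $|A+B|^2\le2|A|^2+2|B|^2$ together with $0\le a\le1$ and the boundedness of $\varphi$ (up to scaling, $\varphi$ is the ground state of $(P_\infty)$; see \cite{RFELLS}) to obtain
$$
\iint_{\R^{2N}}\frac{\big|(1-\zeta(x))b(x)-(1-\zeta(z))b(z)\big|^2}{|x-z|^{N+2s}}\,dz\,dx\le2\int_{B(0,2\rho)}G(x)\,dx+2\|\varphi\|_{L^\infty}^2\iint_{\R^{2N}}\frac{|\zeta(x)-\zeta(z)|^2}{|x-z|^{N+2s}}\,dz\,dx,
$$
where $G(x):=\int_{\R^N}\frac{|b(x)-b(z)|^2}{|x-z|^{N+2s}}\,dz\in L^1(\R^N)$, because $\int_{\R^N}G=\iint_{\R^{2N}}\frac{|\varphi(x)-\varphi(z)|^2}{|x-z|^{N+2s}}\,dz\,dx<\infty$. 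The first term tends to $0$ by absolute continuity; in the second, the change of variables $x=\rho\alpha$, $z=\rho\beta$ (recall $\zeta(x)=\xi(|x|/\rho)$) turns the integral into $\rho^{N-2s}$ times a finite constant depending only on $N,s,\xi$, which vanishes as $\rho\to0$ precisely because $N>2s$. Combining with the $L^2$-estimate gives $\|(1-\zeta)\varphi(\cdot-y)\|_s\to0$, hence $v_y^\rho\to\varphi(\cdot-y)$ in $H^s(\R^N)$, and $(i)$ follows.

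\emph{Item $(ii)$.} Here I would fix $\rho>0$ and take an arbitrary sequence $\{y_n\}\subset\R^N$ with $|y_n|\to+\infty$. Then $\phi_\rho(y_n)$ is exactly the function $\phi_n=c_n\,\zeta\,\varphi(\cdot-y_n)$, $c_n=\|\zeta\,\varphi(\cdot-y_n)\|_{L^p(\R^N)}^{-1}$, from the proof of Theorem~\ref{Etm01}; repeating the arguments leading to \eqref{E10}--\eqref{E12} gives $\|\zeta\,\varphi(\cdot-y_n)-\varphi(\cdot-y_n)\|_s\to0$ and $c_n\to1$, whence
$$
\|\phi_\rho(y_n)\|_s^2=c_n^2\,\|\zeta\,\varphi(\cdot-y_n)\|_s^2\longrightarrow\|\varphi\|_s^2=M_\infty=M,
$$
the last identity by Theorem~\ref{Etm01}. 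Since $\{y_n\}$ was arbitrary, $\|\phi_\rho(y)\|_s^2\to M$ as $|y|\to+\infty$, proving $(ii)$.

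\emph{Main obstacle.} The only genuinely new point is the Gagliardo estimate in $(i)$: the cutoff $\zeta$ has gradient of size $\sim\rho^{-1}$ on an annulus of measure $\sim\rho^N$, so one cannot dominate the integrand uniformly in $\rho$ (unlike the situation in the proof of Theorem~\ref{Etm01}, where $\rho$ is fixed and $|y_n|\to\infty$). The decisive gain is the scaling factor $\rho^{N-2s}\to0$, which is available exactly when $N>2s$; everything else is routine or already contained in the paper.
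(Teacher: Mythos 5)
Your proof of item $(ii)$ follows the paper's own route exactly (re-running the estimates from Theorem~\ref{Etm01} with $\rho$ fixed and $|y_n|\to\infty$), so there is nothing to add there.

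Your proof of item $(i)$ is correct and takes a genuinely different, and arguably cleaner, path than the paper's. The paper estimates the Gagliardo contribution of $(\zeta-1)\varphi(\cdot-y)$ by separating the difference quotient into the two terms $I_1$ and $I_2$, and then, following \cite{MXBZXZ}, tames $I_1$ by splitting $\R^N\times\R^N$ into three regions $\Omega_1,\Omega_2,\Omega_3$ and proving annulus-by-annulus bounds of the form $\rho^{-2s}\int_{B(-y,k\rho)}|\varphi|^2$ plus a tail term of order $k^{-N}$, finally sending $k\to\infty$ and $\rho\to0$. Your approach instead applies the elementary product-rule identity $a(x)b(x)-a(z)b(z)=a(x)(b(x)-b(z))+b(z)(a(x)-a(z))$ with $a=1-\zeta$, $b=\varphi(\cdot-y)$, dominates the first resulting term by $\int_{B(0,2\rho)}G_0(x-y)\,dx$ with $G_0\in L^1$ (absolutely continuous), and handles the second by the exact scaling $[\zeta]_{H^s}^2=\rho^{N-2s}[\xi(|\cdot|)]_{H^s}^2$, which vanishes precisely because $N>2s$. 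This substitutes a one-line scaling observation for the paper's three-case decomposition, at the modest cost of invoking $\varphi\in L^\infty(\R^N)$ (which the paper also uses, via $\varphi(0)<\infty$, so no extra regularity is assumed). One small remark worth keeping in mind: the paper's more laborious estimate is worded to give $I_1<\varepsilon$ \emph{uniformly} in $y$, which is what Corollary~\ref{Tcor01} actually needs; your bounds deliver the same uniformity, since the measure of $B(-y,2\rho)$ is independent of $y$ and the second term is $y$-independent, but you might want to say so explicitly so that the corollary can still be read off your version.
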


\begin{proof}
Part {$(i)$} - Taking into account that $\xi({|x|}/{\rho}) = 1$ for every $|x|\geq 2\rho$, since $\xi$ is bounded and $\varphi$ is radially symmetric and non-increasing, we have that
$$
\begin{aligned}
\|v_{y}^{\rho} - \varphi(\cdot - y)\|_{L^p(\R^N)}^{p} &= \int_{\R^N}\left| \xi\left( \frac{|x|}{\rho}\right)\varphi(x-y) - \varphi (x-y) \right|^pdx\\
&=\int_{B(0, 2\rho)}\left| \left( \xi\left( \frac{|x|}{\rho}\right) - 1 \right)\varphi (x-y) \right|^pdx\\
&\leq K_1\int_{B(0,2\rho)}|\varphi (x-y)|^pdx\leq K_1\varphi^p (0)|B(0,2\rho)|,
\end{aligned}
$$
for some $K_1>0$.\par
\noindent Therefore, taking the limit of $\rho \to 0$, for every $y\in\R^N$, we get
$$
\|v_{y}^{\rho} - \varphi(\cdot - y)\|_{L^p(\R^N)} \to 0,
$$
and so,
\begin{equation}\label{T01-0}
\|v_{y}^{\rho}\|_{L^p(\R^N)} \to \|\varphi(\cdot - y)\|_{L^p(\R^N)} = \|\varphi\|_{L^p(\R^N)} =1,
\end{equation}
for every $y\in\R^N$.\par
\noindent On the other hand,
$$
\begin{aligned}
\|v_{y}^{\rho} - \varphi(\cdot-y)\|_{s}^{2} &= \int_{\R^N}\int_{\R^N}\frac{|(v_{y}^{\rho} - \varphi(\cdot-y))(x) - (v_{y}^{\rho} - \varphi(\cdot-y))(z)|^2}{|x-z|^{N+2s}}dz dx \\
&+ \int_{\R^N} |v_{y}^{\rho}(x) - \varphi(x-y)|^2dx.
\end{aligned}
$$
Of course, we also have
\begin{equation}\label{T01}
\int_{\R^N} \left| \xi\left(\frac{|x|}{\rho} \right)\varphi (x-y) - \varphi (x-y)\right|^2dx \leq  K_1^2\varphi^2(0)|B(0, 2\rho)| = K_2\rho^N.
\end{equation}
Setting
$$
{I_1} := \iint_{\R^{2N}}\frac{\left|\left(\xi(\frac{|x|}{\rho})-\xi(\frac{|z|}{\rho})\right)\right|^2|\varphi(x-y) |^2}{|x-z|^{N+2s}}dz dx
$$
and
$$
{I_2}:= \iint_{\R^{2N}}\frac{\left|\xi(\frac{|z|}{\rho}) - 1 \right|^2|\varphi (x-y) - \varphi(z-y))|^2}{|x-z|^{N+2s}}dz dx,
$$
the following inequality holds
$$
\begin{aligned}
& \int_{\R^N}\int_{\R^N}\frac{|(v_{y}^{\rho} - \varphi(\cdot-y))(x) - (v_{y}^{\rho} - \varphi(\cdot-y))(z)|^2}{|x-z|^{N+2s}}dz dx
 \leq {I_1} + {I_2}.
 \end{aligned}
$$
\noindent After a change of variables
$$
{I_2}=\iint_{\R^{2N}}\frac{\left|\xi(\frac{|z+y|}{\rho}) - 1 \right|^2|\varphi (x) - \varphi(z))|^2}{|x-z|^{N+2s}}dz dx.
$$
Moreover, by definition of $\xi$ we also have
$$
\left|\xi\left(\frac{|z+y|}{\rho}\right) - 1 \right|^2\frac{|\varphi (x)-\varphi (z)|^2}{|x-z|^{N+2s}} \leq 4 \frac{|\varphi (x) - \varphi (z)|^2}{|x-z|^{N+2s}} \in L^1(\R^N \times \R^N)
$$
and
$$
\left|\xi\left(\frac{|z+y|}{\rho}\right) - 1 \right|^2\frac{|\varphi (x)-\varphi (z)|^2}{|x-y|^{N+2s}} \to 0 \;\;\mbox{a.e. in}\;\;\R^N \times \R^N
$$
as $\rho \to 0$.\par
\noindent Hence, the Lebesgue's theorem ensures that
\begin{equation}\label{T02-0}
{I_2} =  \iint_{\R^{2N}}\frac{\left|\xi(\frac{|z+y|}{\rho}) - 1 \right|^2|\varphi (x) - \varphi(z))|^2}{|x-z|^{N+2s}}dz dx \to 0\;\;\mbox{as}\;\;\rho \to 0,
\end{equation}
for every $y\in \R^N$.\par
\noindent Now, following \cite{MXBZXZ} we show that, for every $y\in \R^N$, one has
\begin{equation}\label{T02}
{I_1} = \iint_{\R^{2N}}\frac{\left|\left(\xi(\frac{|x|}{\rho})-\xi(\frac{|z|}{\rho})\right)\right|^2|\varphi(x-y) |^2}{|x-z|^{N+2s}}dz dx \to 0 \;\;\mbox{as}\;\;\rho \to 0.
\end{equation}
Indeed, after a change of variables, it follows that
\begin{equation}\label{T03}
{I_1} = \int_{\R^N}\int_{\R^N} |\varphi (x)|^2\frac{\left|\xi (\frac{x+y}{\rho}) - \xi(\frac{|z+y|}{\rho}) \right|^2}{|x-z|^{N+2s}}dz dx.
\end{equation}
Now, we decompose $\mathbb{R}^N \times \mathbb{R}^N$ as follows
$$
\R^N \times \R^N := \Omega_1 \cup \Omega_2 \cup \Omega_3,
$$
where
$$
\begin{aligned}
&\Omega_1 := \R^N \setminus B(-y,2\rho) \times \R^N \setminus B(-y, 2\rho)\\
&\Omega_2 :=B(-y, 2\rho)\times \R^N\\
&\Omega_3 := \R^N \setminus B(-y,2\rho) \times B(-y,2\rho).
\end{aligned}
$$
Then
\begin{equation}\label{T04}
{I_1} = \left(\iint_{\Omega_1} + \iint_{\Omega_2} + \iint_{\Omega_3} \right) |\varphi (x)|^2\frac{\left| \xi \left(\frac{|x+y|}{\rho} \right)- \xi \left(\frac{|z+y|}{\rho}\right)\right|^{2}}{|x-z|^{N+2s}}dz dx.
\end{equation}

\noindent
{\bf Case 1:} Let $(x,z)\in \Omega_1.$ Since
$$
\xi \left(\frac{|x+y|}{\rho}\right) = \xi \left(\frac{|z+y|}{\rho}\right) = 0,\;\;\forall x, z\in \R^N \setminus B(-y,2\rho),
$$
one has
\begin{equation}\label{T05}
\iint_{\Omega_1} |\varphi(x)|^2\frac{\left|\xi \left(\frac{|x+y|}{\rho}\right) - \xi \left(\frac{|z+y|}{\rho}\right) \right|^2}{|x-z|^{N+2s}}dz dx = 0.
\end{equation}

\noindent
{\bf Case 2:} Let $(x,z)\in \Omega_2$. We notice that
\begin{equation}\label{T06}
\begin{aligned}
\iint_{\Omega_2} |\varphi (x)|^2\frac{\left|\xi \left(\frac{|x+y|}{\rho}\right) - \xi \left(\frac{|z+y|}{\rho}\right) \right|^2}{|x-z|^{N+2s}} dz dx &
\end{aligned}
\end{equation}
$$
\qquad\qquad=\int_{B(-y,2\rho)}\int_{\overline{B}(x,\rho)} |\varphi (x)|^2\frac{\left|\xi \left(\frac{|x+y|}{\rho}\right) - \xi \left(\frac{|z+y|}{\rho}\right) \right|^2}{|x-z|^{N+2s}} dz dx
$$
$$
\qquad\qquad\qquad+ \int_{B(-y,2\rho)}\int_{\R^N \setminus \overline{B}(x,\rho)}|\varphi (x)|^2\frac{\left|\xi \left(\frac{|x+y|}{\rho}\right) - \xi \left(\frac{|z+y|}{\rho}\right) \right|^2}{|x-z|^{N+2s}} dz dx.
$$
Now, a simple computation ensures that
\begin{equation}\label{T07}
\int_{B(-y,2\rho)}\int_{\overline{B}(x,\rho)}|\varphi (x)|^2\frac{\left|\xi \left(\frac{|x+y|}{\rho}\right) - \xi \left(\frac{|z+y|}{\rho}\right) \right|^2}{|x-z|^{N+2s}}dz dx
\end{equation}
$$
\qquad\qquad\qquad\quad\quad\leq \frac{K}{2-2s}\frac{|S^{N-1}|}{\rho^{2s}}\int_{B(-y,2\rho)}|\varphi (x)|^2dx,
$$
for some $K>0$.\par
\noindent On the other hand, we also have
\begin{equation}\label{T08}
\int_{B(-y,2\rho)}\int_{\R^N \setminus \overline{B}(x,\rho)}|\varphi (x)|^2\frac{\left|\xi \left(\frac{|x+y|}{\rho}\right) - \xi \left(\frac{|z+y|}{\rho}\right) \right|^2}{|x-z|^{N+2s}} dz dx
\end{equation}
$$
\qquad\qquad\qquad\quad\quad\leq \frac{2}{s}\frac{|S^{N-1}|}{\rho^{2s}}\int_{B(-y,2\rho)}|\varphi (x)|^2dx.
$$
Thereby, by (\ref{T07}) and (\ref{T08}), it follows that
\begin{equation}\label{T09}
\int_{B(-y,2\rho)}\int_{\R^N} |\varphi (x)|^2\frac{\left|\xi \left(\frac{|x+y|}{\rho}\right) - \xi \left(\frac{|z+y|}{\rho}\right) \right|^2}{|x-z|^{N+2s}} dz dx \leq \frac{C_1}{\rho^{2s}}\int_{B(-y,2\rho)}|\varphi (x)|^2dx,
\end{equation}
for some $C_1>0$.\par

\noindent
{\bf Case 3:} Let $(x,z) \in \Omega_3$.
Let us denote
$$
\mathcal{B}(x,\rho) := \{z\in B(-y,2\rho):\;|x-z|\leq \rho\}\;\;
$$
and
$$
\mathcal{B}^c(x,\rho) := \{z\in B(-y,2\rho):\;|x-z|> \rho\}.
$$
We have
$$
\begin{aligned}
\int \int_{\Omega_3} |\varphi (x)|^2& \frac{\left|\xi \left(\frac{|x+y|}{\rho}\right) - \xi \left(\frac{|z+y|}{\rho}\right) \right|^2}{|x-z|^{N+2s}} dz dx\\
& = \int_{\R^N \setminus B(-y,2\rho)}\int_{\mathcal{B}(x,\rho)}|\varphi (x)|^2\frac{\left|\xi \left(\frac{|x+y|}{\rho}\right) - \xi \left(\frac{|z+y|}{\rho}\right) \right|^2}{|x-z|^{N+2s}} dz dx\\
& + \int_{\R^N \setminus B(-y,2\rho)} \int_{\mathcal{B}^c(x,\rho)}|\varphi (x)|^2\frac{\left|\xi \left(\frac{|x+y|}{\rho}\right) - \xi \left(\frac{|z+y|}{\rho}\right) \right|^2}{|x-z|^{N+2s}} dz dx.
\end{aligned}
$$
Arguing as above, it is easy to see that
\begin{equation}\label{T10}
\int_{\R^N \setminus B(-y,2\rho)}\int_{\mathcal{B}(x,\rho)}|\varphi (x)|^2 \frac{\left|\xi \left(\frac{|x+y|}{\rho}\right) - \xi \left(\frac{|z+y|}{\rho}\right) \right|^2}{|x-z|^{N+2s}} dz dx
\end{equation}
$$
\qquad\qquad\qquad\quad\quad\leq \frac{K|S^{N-1}|}{\rho^{2s}}\int_{B(-y,3\rho)} |\varphi (x)|^2dx,
$$
for some $K>0$.\par
\noindent Now, let us consider the integral
 $$
 \int_{\R^N \setminus B(-y,2\rho)} \int_{\mathcal{B}^c(x,\rho)}|\varphi (x)|^2\frac{\left|\xi \left(\frac{|x+y|}{\rho}\right) - \xi \left(\frac{|z+y|}{\rho}\right) \right|^2}{|x-z|^{N+2s}} dz dx.
 $$
 If $(x,z)\in \R^N \setminus B(-y,2\rho)\times B(-y,2\rho)$, $|x-z|> \rho$  and $k>4$, we have
$$
\R^N \setminus B(-y,2\rho)\times B(-y,2\rho) \subset B(-y, k\rho)\times B(-y,2\rho) \cup [\R^N \setminus B(-y, k\rho) \times B(-y2\rho)].
$$
Therefore,
$$
\begin{aligned}
\int_{\R^N \setminus B(-y,2\rho)} \int_{\mathcal{B}^c(x,\rho)}|\varphi (x)|^2&\frac{\left|\xi \left(\frac{|x+y|}{\rho}\right) - \xi \left(\frac{|z+y|}{\rho}\right) \right|^2}{|x-z|^{N+2s}} dz dx\\
&\leq \int_{B(-y,k\rho)}\int_{\mathcal{B}^c(x,\rho)}|\varphi (x)|^2\frac{\left|\xi \left(\frac{|x+y|}{\rho}\right) - \xi \left(\frac{|z+y|}{\rho}\right) \right|^2}{|x-z|^{N+2s}} dz dx\\
&+ \int_{\R^N \setminus B(-y, k\epsilon)}\int_{\mathcal{B}^c(x,\rho)}|\varphi (x)|^2\frac{\left|\xi \left(\frac{|x+y|}{\rho}\right) - \xi \left(\frac{|z+y|}{\rho}\right) \right|^2}{|x-z|^{N+2s}} dz dx.
\end{aligned}
$$
Moreover, a direct computation gives
\begin{equation}\label{T11}
\int_{B(-y,k\rho)}\int_{\mathcal{B}^c(x,\rho)}|\varphi (x)|^2\frac{\left|\xi \left(\frac{|x+y|}{\rho}\right) - \xi \left(\frac{|z+y|}{\rho}\right) \right|^2}{|x-z|^{N+2s}} dz dx
\end{equation}
$$
\qquad\qquad\qquad\quad\quad\leq \frac{2}{s}\frac{|S^{N-1}|}{\rho^{2s}}\int_{B(-y,k\rho)}|\varphi (x)|^2dx.
$$
Now, if $(x,z)\in \R^N \setminus B(-y,k\epsilon)\times B(-y,2\rho)$, we must have
$$
|x-z| \geq |x+y| - |z+y| = \frac{|x+y|}{2} + \frac{|x+y|}{2} - 2\rho\geq \frac{|x+y|}{2} + \frac{k\rho}{2} - 2\rho \geq \frac{|x+y|}{2}.
$$
Hence, the H\"older inequality yields
\begin{equation}\label{T12}
\begin{aligned}
\int_{\R^N \setminus B(-y, k\epsilon)}&\int_{\mathcal{B}^c(x,\rho)}|\varphi (x)|^2\frac{\left|\xi \left(\frac{|x+y|}{\rho}\right) - \xi \left(\frac{|z+y|}{\rho}\right) \right|^2}{|x-z|^{N+2s}} dz dx\\
& = 2^{2+N+2s}|S^{N-1}|2^N\rho^N \int_{\R^N\setminus B(-y,k\rho)} \frac{|\varphi (x)|^2}{|x+y|^{N+2s}}dx\\
&\leq 2^{2+2N+2s}|S^{N-1}|\rho^N \left( \int_{\R^N \setminus B(-y,k\rho)}|\varphi (x)|^{2_{s}^{*}}dx \right)^{\frac{2}{2_{s}^{*}}}\left( \int_{\R^N \setminus B(-y,k\rho)} \frac{dx}{|x+y|^{\frac{N^2}{2s} + N}}\right)^{\frac{2s}{N}}\\
& = 2^{2+2N+2s}|S^{N-1}|\left( \frac{2s}{N^2}|S^{N-1}|\right)^{\frac{2s}{N}} \frac{1}{k^N}\left(\int_{\R^N\setminus B(-y, k\rho)}|\varphi (x)|^{2_{s}^{*}}dx \right)^{\frac{2}{2_{s}^{*}}}.
\end{aligned}
\end{equation}
Therefore, by (\ref{T11}) and (\ref{T12}), it follows that
\begin{equation}\label{T13}
\begin{aligned}
\int_{\R^N \setminus B(-y,2\rho)} &\int_{\mathcal{B}^c(x,\rho)}|\varphi (x)|^2\frac{\left|\xi \left(\frac{|x+y|}{\rho}\right) - \xi \left(\frac{|z+y|}{\rho}\right) \right|^2}{|x-z|^{N+2s}} dz dx\\
& \leq \frac{C_2}{\rho^{2s}}\int_{B(-y,3\rho)}|\varphi (x)|^2dx + \frac{C_3}{\rho^{2s}} \int_{B(-y,k\rho)}|\varphi (x)|^2dx + \frac{C_4}{k^N}\left( \int_{\R^N \setminus B(-y,k\rho)} |\varphi (x)|^{2_{s}^{*}}\right)^{\frac{2}{2_{s}^{*}}},
\end{aligned}
\end{equation}
for some positive constants $C_i$, $i=2,3,4$.\par
\noindent Finally, by (\ref{T05}), (\ref{T09}) and (\ref{T13}), one has
$$
\begin{aligned}
{I_1} 
&\leq \frac{C_1}{\rho^{2s}}\int_{B(-y,2\rho)}|\varphi (x)|^2dx + \frac{C_2}{\rho^{2s}}\int_{B(-y,3\rho)}|\varphi (x)|^2dx + \frac{C_3}{\rho^{2s}}\int_{B(-y, k\rho)}|\varphi (x)|^2dx\\
&+ \frac{C_4}{k^N}\left( \int_{\R^N \setminus B(-y,k\rho)} |\varphi (x)|^{2_{s}^{*}}\right)^{\frac{2}{2_{s}^{*}}}\\
&\leq \frac{\tilde{C}}{\rho^{2s}}\int_{B(-y, k\rho)}|\varphi (x)|^2dx +  \frac{C_4}{k^N}\left( \int_{\R^N \setminus B(-y,k\rho)} |\varphi (x)|^{2_{s}^{*}}\right)^{\frac{2}{2_{s}^{*}}}\\
&\leq \tilde{C}k^{{2s}}\left(\int_{B(-y,k\rho)}|\varphi (x)|^{2_{s}^{*}}dx\right)^{\frac{2}{2_{s}^{*}}} + \frac{\tilde{C}_4}{k^N},
\end{aligned}
$$
for some positive constants $\tilde{C}$ and $\tilde{C}_4$.\par
\noindent Now, given $\varepsilon >0$, we can fix $k$ large enough such that $$\frac{\tilde{C}_4}{k^N}< \frac{\varepsilon}{2}.$$ Hence,
$$
{I_1} \leq \tilde{C}k^{{2s}}\left(\int_{B(-y,k\rho)}|\varphi (x)|^{2_{s}^{*}}dx\right)^{\frac{2}{2_{s}^{*}}} + \frac{\varepsilon}{2}.
$$
Moreover, let us fix $\rho$ small enough such that
$$
\int_{B(-y,k\rho)}|\varphi (x)|^{2_{s}^{*}}dx < \left(\frac{\varepsilon}{2}\right)^{\frac{2^*_s}{2}}\frac{1}{\tilde{C}k^{2s}},
$$
for every $y\in \R^N$.\par
\noindent Hence, ${I_1} < \varepsilon$ uniformly in $y$ for $\rho$ small enough, that is,
$$
\lim_{\rho \to 0}{I_1} =0,
$$
for every $y\in\R^N$. Now, by (\ref{T01}), (\ref{T02-0}) and (\ref{T02}), we get
\begin{equation}\label{T18}
\|v_{y}^{\rho}\|_s\to \|\varphi\|_{s} = M\;\;\mbox{as}\;\;\rho \to 0,
\end{equation}
for every $y\in\R^N$.\par
\noindent Therefore, combining (\ref{T01-0}) and (\ref{T18}), we find
$$
\phi_\rho(y) = \frac{v_y^\rho}{\|v_y^\rho\|_{L^p(\R^N)}} \to \varphi (\cdot - y)\;\;\mbox{in}\;\;H^s(\R^N)\;\;\mbox{as}\;\;\rho \to 0,
$$
for every $y\in\R^N$.\par

\bigskip

\noindent
 Part {$(ii)$} - For each $\rho$ fixed, let us consider an arbitrary sequence $\{y_n \}\subset \R^N$ with $|y_n|\to +\infty$ as $n\to +\infty$. As in the proof of Theorem \ref{Etm01} we can show that
$$
\|v_{y_n}^{\rho} - \varphi(\cdot - y_n)\|_{L^p(\R^N)} \to 0\;\;\mbox{and}\;\;\|v_{y_n}^{\rho} - \varphi (\cdot - y_n)\|_s^2 \to 0
$$
as $|y_n| \to +\infty$. On the other hand, since $\{y_n\}$ is arbitrary, it follows that
$$
\|\phi_\rho(y)\|_{s}^{2} = \left\| \frac{v_y^\rho}{\|v_{y}^{\rho}\|_{L^p(\R^N)}}\right\|_s^2 \to \frac{\|\varphi (\cdot -y)\|_{s}^{2}}{\|\varphi(\cdot-y)\|_{L^p(\R^N)}^{2}} = \frac{\|\varphi\|_{s}^{2}}{\|\varphi\|_{L^p(\R^N)}^2} = M
$$
as $|y|\to +\infty$.
\end{proof}

\begin{corollary}\label{Tcor01}
There is $\tilde{\rho}>0 $ such that
\begin{equation}\label{T19}
\sup_{y\in \R^N}\|\phi_\rho(y)\|_{s}^{2} < 2^{\frac{p-2}{p}}M,
\end{equation}
for every $\rho \leq \tilde{\rho}$.
\end{corollary}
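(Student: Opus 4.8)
The plan is to strengthen the pointwise statement of Lemma~\ref{Tlm01} by checking that the convergence $\|\phi_\rho(y)\|_s^2\to M$ is in fact \emph{uniform} with respect to $y\in\R^N$, and then to exploit the trivial strict inequality $M<2^{\frac{p-2}{p}}M$, which holds because $p>2$ forces $\frac{p-2}{p}>0$, hence $2^{\frac{p-2}{p}}>1$. Concretely, I would establish
$$
\sup_{y\in\R^N}\bigl|\,\|\phi_\rho(y)\|_s^2-M\,\bigr|\longrightarrow 0\qquad\text{as }\rho\to 0^{+},
$$
and then simply pick $\tilde\rho>0$ so small that for every $\rho\le\tilde\rho$ this supremum is smaller than $\bigl(2^{\frac{p-2}{p}}-1\bigr)M$, which yields \eqref{T19} at once.

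To obtain the uniformity I would revisit the proof of Lemma~\ref{Tlm01}(i) and observe that each estimate there either does not involve $y$ at all, or can be made uniform in $y$ by invoking absolute continuity of the Lebesgue integral of a fixed $L^1$ function over balls of a fixed radius translated by $-y$. Indeed, the bound $\|v_y^\rho-\varphi(\cdot-y)\|_{L^p(\R^N)}^p\le K_1\varphi^p(0)|B(0,2\rho)|$ and the $L^2$--estimate \eqref{T01} are manifestly independent of $y$. For the Gagliardo term $I_2$, writing $g(z):=\int_{\R^N}\frac{|\varphi(x)-\varphi(z)|^2}{|x-z|^{N+2s}}\,dx$, one has $g\in L^1(\R^N)$ since $\varphi\in H^s(\R^N)$, and from $|\xi(|z+y|/\rho)-1|\le\chi_{B(-y,2\rho)}(z)$ one gets $I_2\le\int_{B(-y,2\rho)}g(z)\,dz$; since $|B(-y,2\rho)|=|B(0,2\rho)|\to0$ as $\rho\to0$, absolute continuity of $E\mapsto\int_E g$ forces $\sup_{y}I_2\to0$. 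For $I_1$, Cases~1--3 of the proof already culminate in a bound of the form $I_1\le\tilde{C}k^{2s}\bigl(\int_{B(-y,k\rho)}|\varphi(x)|^{2_{s}^{*}}\,dx\bigr)^{2/2_{s}^{*}}+\tilde{C}_4/k^{N}$, whose only $y$--dependent ingredient is $\int_{B(-y,k\rho)}|\varphi|^{2_{s}^{*}}$; since $|\varphi|^{2_{s}^{*}}\in L^1(\R^N)$, the same absolute-continuity argument controls this quantity uniformly in $y$, so given $\varepsilon>0$ one first fixes $k$ with $\tilde{C}_4/k^N<\varepsilon/2$ and then $\rho$ small enough, \emph{independently of $y$}, so that $I_1<\varepsilon$.

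Putting these pieces together produces a modulus $\omega(\rho)\to0$, independent of $y$, with $\|v_y^\rho-\varphi(\cdot-y)\|_s^2\le\omega(\rho)$. Using translation invariance of $\|\cdot\|_s$ together with $\|\varphi\|_s^2=M_\infty=M$ (Theorem~\ref{Etm01}), the triangle inequality gives $\|v_y^\rho\|_s^2\to M$ uniformly in $y$, while the $L^p$--bound above gives $\|v_y^\rho\|_{L^p(\R^N)}\to1$ uniformly in $y$; hence
$$
\|\phi_\rho(y)\|_s^2=\frac{\|v_y^\rho\|_s^2}{\|v_y^\rho\|_{L^p(\R^N)}^2}\longrightarrow M
$$
uniformly in $y\in\R^N$, which is exactly what the corollary requires. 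The only genuinely delicate point is the verification that the estimates for $I_1$ and $I_2$ are uniform in $y$; once they are phrased as absolute continuity of the integral of a fixed integrable function over translates of a shrinking ball, the remainder is routine bookkeeping.
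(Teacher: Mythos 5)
Your proposal is correct and takes essentially the same approach as the paper: the paper's proof of Corollary~\ref{Tcor01} also reduces to the observation that the convergence $\|\phi_\rho(y)\|_s^2\to M$ as $\rho\to0$ holds uniformly in $y$ (a fact already flagged inside the proof of Lemma~\ref{Tlm01}(i), where the $I_1$ estimate is shown to hold ``uniformly in $y$ for $\rho$ small enough''), after which one simply picks $\tilde\rho$ using $M<2^{(p-2)/p}M$. Your contribution is to make the uniformity explicit and justify it cleanly by absolute continuity of the integral of a fixed $L^1$ function over translated shrinking balls, which supplies a detail the lemma's statement leaves implicit but which the corollary genuinely needs.
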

\begin{proof}
Since $p>2$, then 
$M < 2^{\frac{p-2}{p}}M$. By Lemma \ref{Tlm01} - Part $(ii)$, one has
$$
\lim_{\rho \to 0}\|\phi_\rho (y)\|_s^2 = M,
$$
for every $y\in\R^N$. So, given $\varepsilon \in \left(0, 2^{\frac{p-2}{p}}M -M\right)$, there is $\tilde{\rho}>0$ such that
$$
M \leq \|\phi_\rho (y)\|_{s}^{2}\leq M + \epsilon,\;\;\forall \rho \leq \tilde{\rho}
$$
for every $y\in\R^N$.\par
\noindent Consequently
$$
\sup_{y\in \R^N}\|\phi_\rho (y)\|_{s}^{2} \leq M +\varepsilon < M + 2^{\frac{p-2}{p}}M - M < 2^{\frac{p-2}{p}}M,
$$
which proves the claim.
\end{proof}
Hereafter, let us fix $\rho < \tilde{\rho}$, where $\rho$ is the smallest positive number such that
$$
\R^N \setminus \Omega \subset B(0, \rho).
$$
Furthermore, consider the barycenter function given by
$$
\begin{aligned}
\tau : H^s(\R^N)&\to \R^N\\
u&\mapsto \tau (u) := \int_{\R^N} |u(x)|^2\chi (|x|)xdx,
\end{aligned}
$$
where $\chi \in C(\R^+, \R)$ is a non-increasing real function such that
$$
\chi (t) := \begin{cases}
1&t\in (0,R]\\
\displaystyle\frac{R}{t}&t>R,
\end{cases}
$$
for some $R>0$ for which $\R^N \setminus \Omega \subset B(0, R)$. By definition of $\chi$, of course
\begin{equation}\label{T20-0}
\chi (|x|)|x| \leq R,\;\;\;\;\forall x\in \R^N.
\end{equation}
Set
$$
\mathcal{T}_0 := \{u\in \mathcal{V}\cap \mathcal{P}:\;\;\tau (u) = 0\} \subset X_0^s.
$$

\begin{lemma}\label{Tlm02}
If $\displaystyle c_0 := \inf_{u\in \mathcal{T}_0}\|u\|^2$, then
\begin{equation}\label{T20}
M < c_0,
\end{equation}
and there is $R_0$, with $R_0>\rho$, such that:
\begin{itemize}
\item[$(i)$] If $y\in\R^N$, with $|y|\geq R_0$, then $$\|\phi_\rho(y)\|_s^2 \in \left(M, \frac{c_0 + M}{2}\right);$$
\item[$(ii)$] If $y\in\R^N$, with $|y| = R_0$, then $$\langle \tau(\phi_\rho (y)), y \rangle>0.$$
\end{itemize}
\end{lemma}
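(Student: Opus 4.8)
The plan is to derive the whole statement from three facts already established: the minimization problem \eqref{E02} has no solution (Theorem \ref{Etm01}), every minimizing sequence for $M_\infty=M$ is relatively compact up to translations (Theorem \ref{tm01}), and the behaviour of $\phi_\rho$ in Lemma \ref{Tlm01}. First note that $\mathcal{T}_0$ is nonempty (a radially symmetric nonnegative bump supported in an annulus contained in $\Omega$, normalized in $L^p(\Omega)$, has vanishing barycenter), so $c_0<\infty$, and since $\mathcal{T}_0\subset\{u\in X_0^s:\int_\Omega|u|^pdx=1\}$ the bound $c_0\ge M$ is immediate from \eqref{E02}; thus the content of \eqref{T20} is the strict inequality. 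Granting $c_0>M$, part $(i)$ is quick: each $\phi_\rho(y)$ lies in $X_0^s$ with $\|\phi_\rho(y)\|_{L^p(\R^N)}=1$ (so $\int_\Omega|\phi_\rho(y)|^pdx=1$), hence it is admissible for \eqref{E02} and, by Theorem \ref{Etm01}, cannot realize the infimum, i.e. $\|\phi_\rho(y)\|_s^2=\|\phi_\rho(y)\|^2>M$ for every $y$; meanwhile Lemma \ref{Tlm01}$(ii)$ gives $\|\phi_\rho(y)\|_s^2\to M$ as $|y|\to\infty$, and since $\frac{c_0+M}{2}>M$ there is $R_1>0$ with $\|\phi_\rho(y)\|_s^2<\frac{c_0+M}{2}$ for all $|y|\ge R_1$.

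To prove $M<c_0$, assume for contradiction $c_0=M$ and pick $\{u_n\}\subset\mathcal{T}_0$ with $\|u_n\|^2\to M$. Extended by zero, $u_n\in H^s(\R^N)$ with $\|u_n\|_s=\|u_n\|$, $\|u_n\|_{L^p(\R^N)}=1$, $u_n\ge0$ and $\tau(u_n)=0$, so $\{u_n\}$ is a minimizing sequence for $M_\infty=M$. By Theorem \ref{tm01} there are $\{z_n\}\subset\R^N$ and $w\in H^s(\R^N)$ such that, along a subsequence, $u_n^{z_n}\to w$ in $H^s(\R^N)$, with $\|w\|_s^2=M_\infty$ and $\|w\|_{L^p(\R^N)}=1$ (in particular $w\neq0$). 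If $\{z_n\}$ is bounded, passing to a further subsequence $z_n\to z_0$, and continuity of translations in $H^s(\R^N)$ gives $u_n\to w(\cdot-z_0)=:v_0$ in $H^s(\R^N)$; then $v_0=0$ a.e. in $\R^N\setminus\Omega$ (being an $L^2$-limit of functions vanishing there), so $v_0\in X_0^s$ with $\|v_0\|^2=\|w\|_s^2=M_\infty=M$ and $\|v_0\|_{L^p(\R^N)}=1$, contradicting Theorem \ref{Etm01}. If $\{z_n\}$ is unbounded, passing to a further subsequence $|z_n|\to\infty$ and $z_n/|z_n|\to e$ with $|e|=1$. Put $g(x):=\chi(|x|)x$, so $|g(x)|\le R$ by \eqref{T20-0}; the change of variable $x=z_n+\xi$ gives
$$
\tau(u_n)=\int_{\R^N}|u_n^{z_n}(\xi)|^2\,g(z_n+\xi)\,d\xi .
$$
Since $u_n^{z_n}\to w$ in $L^2(\R^N)$ we get $|u_n^{z_n}|^2\to|w|^2$ in $L^1(\R^N)$, while $g(z_n+\xi)\to Re$ pointwise with $|g(z_n+\xi)|\le R$; passing to the limit yields $\tau(u_n)\to R\,\|w\|_{L^2(\R^N)}^2\,e\neq0$, contradicting $\tau(u_n)=0$. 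Hence $c_0>M$, which is \eqref{T20}.

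For part $(ii)$, since $\langle\tau(\phi_\rho(y)),y\rangle=\|v_y^\rho\|_{L^p(\R^N)}^{-2}\,\langle\tau(v_y^\rho),y\rangle$ and the scalar prefactor is positive, it suffices to prove $\langle\tau(v_y^\rho),y\rangle>0$ for all large $|y|$. With $g(x)=\chi(|x|)x$ and the substitution $x=\xi+y$,
$$
\langle\tau(v_y^\rho),y\rangle=\int_{\R^N}\zeta(\xi+y)^2\,\varphi(\xi)^2\,\langle g(\xi+y),y\rangle\,d\xi ,
$$
and I would split this over $\{|\xi|\le|y|/2\}$ and $\{|\xi|>|y|/2\}$. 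On the first set, if $|y|>\max\{4\rho,2R\}$ then $|\xi+y|\ge|y|/2>\max\{2\rho,R\}$, so $\zeta(\xi+y)=1$ and $g(\xi+y)=R(\xi+y)/|\xi+y|$, whence $\langle g(\xi+y),y\rangle=R(|y|^2+\langle\xi,y\rangle)/|\xi+y|\ge R|y|/3$; on the second set the integrand is bounded in absolute value by $R|y|\,\varphi(\xi)^2$. Therefore, for $|y|$ large,
$$
\langle\tau(v_y^\rho),y\rangle\ \ge\ R|y|\Big(\tfrac13\!\int_{\{|\xi|\le|y|/2\}}\!\varphi(\xi)^2\,d\xi-\!\int_{\{|\xi|>|y|/2\}}\!\varphi(\xi)^2\,d\xi\Big),
$$
and the bracket tends to $\tfrac13\|\varphi\|_{L^2(\R^N)}^2>0$ as $|y|\to\infty$ because $\varphi\in L^2(\R^N)$ with $\varphi\not\equiv0$; hence there is $R_2>0$ with $\langle\tau(v_y^\rho),y\rangle>0$, and so $\langle\tau(\phi_\rho(y)),y\rangle>0$, whenever $|y|\ge R_2$. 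Taking $R_0:=\max\{R_1,R_2,\rho+1\}$ then proves the lemma: $R_0>\rho$, part $(i)$ holds for all $|y|\ge R_0$, and part $(ii)$ for $|y|=R_0\ge R_2$. The only genuinely delicate point is the strict inequality $M<c_0$: one must play the barycenter constraint $\tau(u_n)=0$ against the dichotomy for minimizing sequences of $M_\infty$ — concentration at a bounded point, where non-attainment of $M$ gives the contradiction, versus escape to infinity, where the constraint is destroyed in the limit — everything else being a routine use of Lemma \ref{Tlm01} and the explicit form of $\chi$.
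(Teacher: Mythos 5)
Your proof is correct, but the argument for the key strict inequality $M<c_0$ takes a genuinely different route from the paper's. The paper runs the minimizing sequence through Ekeland's variational principle to obtain a Palais--Smale sequence at the level $\left(\frac12-\frac1p\right)M^{p/(p-2)}$, then invokes the splitting Lemma~\ref{Elm01}: the energy level forces $k\le 1$, non-attainment kills $k=0$, so $k=1$ with $u^0=0$, after which the barycenter is analysed by decomposing $\R^N$ into the two half-spaces $(\R^N)_n^\pm$, using the Berestycki--Lions radial bound $u(z)\le C\|u\|_{L^2}|z|^{-N/2}$ and the intermediate value theorem to exhibit a ball $B(y_n,\tilde r)$ on which $u(\cdot-y_n)\ge u(0)/2$. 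You instead note that a minimizing sequence in $\mathcal{T}_0$ is, after zero extension, already a minimizing sequence for $M_\infty$, apply the concentration-compactness Theorem~\ref{tm01} directly, and dichotomize on whether the compensating translations $\{z_n\}$ stay bounded: if bounded you produce a minimizer of \eqref{E02}, contradicting Theorem~\ref{Etm01}; if unbounded you rewrite $\tau(u_n)=\int|u_n^{z_n}|^2\,g(z_n+\cdot)$, use $L^2$-convergence of $u_n^{z_n}$, $|g|\le R$, and $g(z_n+\xi)\to Re$ pointwise, and conclude by dominated convergence that $\tau(u_n)\to R\|w\|_{L^2}^2 e\neq 0$. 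This bypasses both the Ekeland/PS machinery and the pointwise decay estimate, replacing them by soft $L^1$/dominated-convergence arguments, and is arguably cleaner; the paper's approach has the (minor) merit of reusing Lemma~\ref{Elm01}, which is needed elsewhere anyway. Your proof of part $(ii)$ is likewise a bit cleaner than the paper's: rather than the $\varphi(0)/2$ trick on a small ball $B(y,\tilde r)$, you split $\R^N$ at $|\xi|=|y|/2$ and obtain the explicit lower bound $\tfrac{1}{3}\|\varphi\|_{L^2}^2>0$ in the limit. Part $(i)$ follows the same logic as the paper (admissibility plus non-attainment for the lower bound, Lemma~\ref{Tlm01}$(ii)$ for the upper). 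One point worth being explicit about: Theorem~\ref{tm01} must deliver convergence in the $H^s(\R^N)$ norm (not merely weak) --- this is indeed what the statement ``and so $M_\infty$ is attained'' implies, and your proof uses it both to conclude $\|w\|_s^2=M_\infty$, $\|w\|_{L^p}=1$ and to get the strong $L^2$-convergence that drives the unbounded case.
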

\begin{proof}
Since
$$
c_0 = \inf_{u\in \mathcal{T}_0}\|u\|^2\;\;\mbox{and}\;\;M =\inf_{u\in \mathcal{V}}\|u\|^2,
$$
we have
$$
M \leq c_0.
$$
Now we are going to show that $c_0\neq M$. Suppose by contradiction that $c_0 = M$. Then, there is a minimizing sequence $\{v_n\}\subset X_0^s$ such that
$$
 \|v_n\|^2 \to M \;\;\mbox{as}\;\;n\to +\infty, \quad \|v_n\|_{L^p(\Omega)} = 1\;\;\mbox{and}\;\;\tau (v_n) = 0,\;\;\forall n\in \N.
$$
By using the Ekeland variational principle, we can suppose that
\begin{equation}\label{T24}
\|J'({v}_n)\|_{*} \to 0.
\end{equation}
By considering the sequence ${u}_n := M^{\frac{1}{p-2}}{v}_n$ it easily seen that
$$
I'({u}_n) = o_n(1) \;\;\mbox{in}\;\;(X_0^s)^*
$$
and
$$
I({u}_n)= \left(\frac{1}{2}-\frac{1}{p} \right)M^{\frac{p}{p-2}} + o_n(1).
$$
Moreover, by Lemma \ref{Elm01}, one has
$$
\|u_n\|^2 \to \|u^0\|^2 +\sum_{j=1}^{k}\|u^j\|_{s}^{2}
$$
and
$$
I(u_n) \to I(u^0) + \sum_{j=1}^{k}I_{\infty}(u^j).
$$
Thereby,
$$
I(u_n) \to I(u^0) + \sum_{j=1}^{k}I_{\infty}(u^j) \geq I(u^0) + k\left( \frac{1}{2}-\frac{1}{p}\right)M^{\frac{p}{p-2}}.
$$
Since $I(u^0)\geq 0$, then $k\leq 1$, bearing in mind that
$$
I(u_n) = \left(\frac{1}{2} - \frac{1}{p} \right)M^{\frac{p}{p-2}} + o_n(1).
$$
Hence, we must have either $k=0$ or $k=1$. If $k = 0$, we obtain that $\|u_n\|^2 \to \|u^0\|^2$, which leads to
$$
u_n \to u^0\;\;\mbox{in}\;\;X_0^s \quad \mbox{and} \quad \|u^0\|^2 = M^{\frac{p}{p-2}}.
$$
This is impossible, because $M$ is not achieved in $\mathcal{V}$, and so, $k\neq 0$. For $k=1$, we must have $u^0=0$. Consequently,
$$
\|u_n\|^2 \to \|u^1\|_s^2\;\;\mbox{and}\;\;I(u^1) = \left(\frac{1}{2}-\frac{1}{p}\right)M^{\frac{p}{p-2}}.
$$
Here we used the fact that $u^1$ must be a positive ground state solution of
$$
\left\{
\begin{aligned}
(-\Delta)^s u+ u &= |u|^{p-2}u,\;\;\mbox{in}\;\;\R^N\\
u&\in H^s(\R^N),
\end{aligned}
\right.
$$
and so, by uniqueness, $I(u^1) = \left(\frac{1}{2}-\frac{1}{p}\right)M^{\frac{p}{p-2}}$. Since $u_n \rightharpoonup u^0 \equiv 0$, we get
$$
\psi_{n}^{1}(x+y_n^1) = u_n(x+y_n^1) \rightharpoonup u^1(x)
$$
and
$$
\|\psi_{n}^{1}(\cdot + y_n^1)\|_s^2 = \|u_n(\cdot + y_n^1)\|_s^2 = \|u_n\|_{s}^{2}= \|u_n\|^2 \to \|u^1\|_{s}^{2},
$$
where $\{y_n^1\}$ be a sequence such that $|y_n^1| \to +\infty$. Therefore
$$
u_n(\cdot + y_n^1) \to u^1\;\;\mbox{in}\;\;H^s(\R^N).
$$
Taking
$$
u^1 = u,\;\;y_n^1 = y_n\;\;\mbox{and}\;\;w_n(x + y_n^1) = u_n(x + y_n^1) - u^1(x),
$$
we have
$$
w_n(x) = u_n(x) - u(x-y_n),\;\;\; \forall x\in \R^N,
$$
and
$$
\|w_n\|_{s}^{2} = \|w_n(\cdot + y_n)\|_{s}^{2} = \|u_n(\cdot + y_n) - u\|_{s}^{2}.
$$
Therefore, the strong convergent of $u_n(\cdot +y_n)$ yields
$$
w_n\to 0\;\;\mbox{in}\;\;H^s(\R^N).
$$
Next, we consider the following sets
$$
(\R^N)_{n}^{+} = \{x\in \R^N:\;\; \langle x , y_n \rangle > 0\}\;\;\mbox{and}\;\;(\R^N)_n^- = \R^N \setminus (\R^N)_{n}^{+}.
$$
Using the fact that $|y_n|\to +\infty$ as $n\to +\infty$, we claim that there is a ball
$$
B(y_n, \tilde{r}) = \{x\in \R^N: \;\;|x-y_n|< \tilde{r}\}\subset (\R^N)_{n}^{+}
$$
such that
\begin{equation}\label{T28}
u(x-y_n) \geq \frac{1}{2}u(0)>0, \;\;\;\; \forall x\in B(y_n, \tilde{r}),
\end{equation}
for $n$ large enough. 
Indeed, firstly we recall that $u(0)$ is the maximum value of $u$ in $\R^N$. As $u$ be a positive radial decreasing function, then
$$
u(z) \leq \left(\frac{N}{|S^{N-1}|} \right)^{1/2} \frac{\|u\|_{L^2(\R^N)}}{|z|^{N/2}},\;\;\forall z\neq 0, \quad ( \mbox{see \cite{HBPL}} )
$$
which implies that
$$
u(z)\to 0\;\;\mbox{as}\;\;|z|\to +\infty.
$$
Then by the Intermediate value theorem, there exists $\tilde{r}>0$ such that
\begin{equation}\label{T27}
u(z) = \frac{1}{2}u(0)>0,\;\;\forall z\in \R^N\;\;\mbox{with}\;\;|z| = \tilde{r}.
\end{equation}
Substituting $z = x-y_n$ into (\ref{T27}), we get (\ref{T28}). On the other hand, for each $\tilde{r}>0$ fixed, 
there is $n_0$ such that
$$
\begin{aligned}
\langle x , y_n \rangle  
&> \frac{|x|^2 + |y_n|^2 - \tilde{r}^2}{2} \geq \frac{|y_n|^2 - \tilde{r}^2}{2} >0, \;\;\forall n\geq n_0, \quad \forall x\in B(y_n, \tilde{r}),
\end{aligned}
$$
showing that
$$
B(y_n, \tilde{r}) \subset (\R^N)_{n}^{+},\;\;\mbox{for $n$ large enough}.
$$
Thus, for $n$ large enough,
$$
|u(x-y_n)|^2, \chi (|x|), \langle x\ , y_n \rangle  >0,\;\;\forall x\in (\R^N)_{n}^{+}, \quad B(y_n, \tilde{r}) \subset (\R^N)_{n}^{+}
$$
and $|x|>R$ for every $x\in B(y_n, \tilde{r})$. Using these informations, a straightforward computation gives
\begin{equation}\label{T29}
\begin{aligned}
\int_{(\R^N)_{n}^{+}} |u(x-y_n)|^2\chi (|x|) \langle x , y_n \rangle dx 
&\geq \frac{R|u(0)|^2}{4}med(B(y_n, \tilde{r}))|y_n|.
\end{aligned}
\end{equation}

Recalling that for each $x\in (\R^N)_{n}^{-}$,
$$
|x-y_n|\geq |x|,
$$
it follows that
$$
|u(x-y_n)|^2\chi(|x|)|x| \leq R |u(|x|)|^2 \in L^1(\R^N).
$$
This combined with the limit
$$
u(x-y_n) \to 0\;\;\mbox{as}\;\;|y_n|\to +\infty
$$
implies that
\begin{equation}\label{T31}
\int_{(\R^N)_{n}^{-}} |u(x-y_n)|^2\chi (|x|)|x|dx = o_n(1).
\end{equation}
Therefore, by the Cauchy-Schwartz inequality and (\ref{T31}),
\begin{equation}\label{T32}
\begin{aligned}
\left\langle \tau(u(x-y_n)),  \frac{y_n}{|y_n|}\right\rangle_{\R^N} & =  \int_{(\R^N)_{n}^{+}} |u(x-y_n)|^2\chi (|x|) \langle x , y_n \rangle dx \\
&\,\,\,\,\,\,+ \int_{(\R^N)_n^-} |u(x-y_n)|^2\chi (|x|) x\cdot y_n dx\\
&\geq \frac{R|u(0)|^2}{4}med(B(y_n, \tilde{r})) - \int_{(\R)_{n}^{-}} |u(x-y_n)|^2\chi (|x|)|x|dx\\
&\geq \frac{R|u(0)|^2}{4}med(B(y_n, \tilde{r})) -o_n(1)>0.
\end{aligned}
\end{equation}
Now, using the fact that $w_n\to 0$ in $H^s(\R^N)$ together with $\tau(u_n) = 0$, we find that
\begin{equation}\label{T33}
\tau (u(x-y_n)) = o_n(1),
\end{equation}
which contradicts (\ref{T32}), and so, $M < c_0$.

\smallskip
\noindent
\indent Proof of Part {$(i)$} - Since $\phi_\rho(y) \in X_0^s$ and $\|\phi_\rho (y)\|_{L^p} = 1$, by Theorem \ref{Etm01} we must have
\begin{equation}\label{T34}
\|\phi_\rho(y)\|^2 > M,\,\;\forall y\in \R^N.
\end{equation}
By Lemma \ref{Tlm01} - (ii), for each $\rho$ fixed
\begin{equation}\label{T35}
\|\phi_\rho(y)\|^2 \to M,\;\,\mbox{as}\;\;|y|\to +\infty.
\end{equation}
Thereby, for a given $\epsilon \in (0, \frac{c_0-M}{2}),$ there is $R_0>0$ such that
\begin{equation*}\label{T36}
|\|\phi_\rho (y)\|^2 - M| < \epsilon \;\;\mbox{whenever}\;\;|y|\geq R_0.
\end{equation*}
From this,
$$
\|\phi_\rho (y)\|^2\in \left(M, \frac{c_0+M}{2} \right),\;\;\forall y\in \R^N\;\;\mbox{such that}\;\;|y|\geq R_0.
$$

\noindent
\indent Proof of Part {$(ii)$} - By definition of $\phi_\rho (y)$ and arguing as above with  $|y|$ large enough, we have
$$
\begin{aligned}
\left\langle \tau (\phi_\rho (y)), y\right\rangle_{\R^N} 
&= \frac{1}{\|v_y^{\rho}\|_{L^p(\R^N)}^2}\int_{(\R^N)^+}\left|\xi \left(\frac{|x|}{\rho}\right)\varphi(x-y) \right|^2\chi(|x|)\langle x , y \rangle dx\\
&+ \frac{1}{\|v_y^{\rho}\|_{L^p(\R^N)}^2}\int_{(\R^N)^-}\left|\xi \left(\frac{|x|}{\rho}\right)\varphi(x-y) \right|^2\chi(|x|)\langle x , y \rangle dx\\
&\geq \frac{1}{\|v_{y}^{\rho}\|_{L^p(\R^N)}} \int_{B(y, \tilde{r})}\left|\xi\left(\frac{|x|}{\rho}\right) \right|^2\frac{|\varphi(0)|^2}{4}\chi (|x|)\langle x , y \rangle dx dx -o(1).
\end{aligned}
$$
Hence, for $|y| = R_0$,
\begin{equation}\label{T38}
\left\langle \tau (\phi_\rho (y)), y\right\rangle_{\R^N} \geq \frac{RR_0}{4}|\varphi (0)|^2medB(B(y, \tilde{r})) - o(1) >0.
\end{equation}
\end{proof}

\section{Proof of Theorem \ref{teo 1}}

\noindent
From now on, we set $\Sigma \subset \mathcal{P} \subset X_0^s$ defined as follows
$$
\Sigma := \{\phi_\rho (y):\;|y|\leq R_0\},
$$
$$
\mathcal{H} := \left\{h\in C(\mathcal{P}\cap \mathcal{V}, \mathcal{P}\cap \mathcal{V}):\; h(u) = u,\;\forall u\in \mathcal{P}\cap \mathcal{V}\;\;\mbox{such that}\;\;\|u\|^2 < \frac{c_0 + M}{2} \right\}
$$
and
$$
\Gamma := \{A\subset \mathcal{P}\cap \mathcal{V}:\;A= h(\Sigma),\;h\in \mathcal{H}\}.
$$
\begin{lemma}\label{Tlm03}
If $A\in \Gamma$, then $A \cap \mathcal{T}_0 \neq \emptyset$.
\end{lemma}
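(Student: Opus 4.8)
The plan is to reduce the statement to a Brouwer--degree assertion about a continuous vector field on a ball in $\R^N$. Given $A\in\Gamma$, fix $h\in\mathcal{H}$ with $A=h(\Sigma)$ and define
\[
F:\overline{B_{R_0}(0)}\subset\R^N\longrightarrow\R^N,\qquad F(y):=\tau\bigl(h(\phi_\rho(y))\bigr).
\]
First I would check that $F$ is well defined and continuous. For every $y$ the function $\phi_\rho(y)$ belongs to $\mathcal{P}\cap\mathcal{V}$: it is nonnegative because $\xi,\varphi\ge 0$; it satisfies $\int_\Omega|\phi_\rho(y)|^p\,dx=1$ by the normalization; and it lies in $X_0^s$ because $\xi(|\cdot|/\rho)\equiv 0$ on $B(0,\rho)\supset\R^N\setminus\Omega$. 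Hence $h(\phi_\rho(y))\in\mathcal{P}\cap\mathcal{V}$ and, since $|y|\le R_0$ gives $\phi_\rho(y)\in\Sigma$, also $h(\phi_\rho(y))\in h(\Sigma)=A$. Continuity of $F$ on the compact set $\overline{B_{R_0}(0)}$ follows from continuity of $y\mapsto\phi_\rho(y)$ into $H^s(\R^N)$ (translation invariance of the Gagliardo seminorm plus dominated convergence, together with $\|v_y^\rho\|_{L^p(\R^N)}$ being continuous and positive, hence bounded away from $0$ on $\overline{B_{R_0}(0)}$), continuity of $h$ on $\mathcal{P}\cap\mathcal{V}$, and continuity of $\tau$ on $H^s(\R^N)$; the latter because $|x|\chi(|x|)\le R$ forces $|\tau(u)-\tau(v)|\le R\|u-v\|_{L^2}\|u+v\|_{L^2}$, so $\tau$ is locally Lipschitz on $L^2$ and thus continuous on $H^s\hookrightarrow L^2$. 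The key observation is that any zero $y_0$ of $F$ yields $h(\phi_\rho(y_0))\in\mathcal{V}\cap\mathcal{P}$ with $\tau(h(\phi_\rho(y_0)))=0$, that is, $h(\phi_\rho(y_0))\in\mathcal{T}_0\cap A$, which is exactly the claim.

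Second, I would analyze $F$ on the boundary sphere $\{|y|=R_0\}$. By Lemma~\ref{Tlm02}(i) we have $\|\phi_\rho(y)\|^2<\frac{c_0+M}{2}$ whenever $|y|=R_0$, so $\phi_\rho(y)$ lies in the region where every $h\in\mathcal{H}$ acts as the identity; therefore $h(\phi_\rho(y))=\phi_\rho(y)$ and $F(y)=\tau(\phi_\rho(y))$ there. Lemma~\ref{Tlm02}(ii) then gives $\langle F(y),y\rangle=\langle\tau(\phi_\rho(y)),y\rangle>0$ for all $|y|=R_0$; in particular $F$ does not vanish on the boundary.

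Finally, I would run the standard degree argument. Consider the homotopy $H(t,y):=tF(y)+(1-t)y$ on $[0,1]\times\overline{B_{R_0}(0)}$. For $|y|=R_0$ one has $\langle H(t,y),y\rangle=t\langle F(y),y\rangle+(1-t)R_0^2>0$, since both summands are nonnegative and at least one is strictly positive; hence $H(t,\cdot)\ne 0$ on $\{|y|=R_0\}$ for every $t$. By homotopy invariance of the Brouwer degree, $\deg(F,B_{R_0}(0),0)=\deg(\mathrm{id},B_{R_0}(0),0)=1\ne 0$, so $F(y_0)=0$ for some $y_0\in B_{R_0}(0)$, and by the reduction above $A\cap\mathcal{T}_0\ne\emptyset$. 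The only parts that are not entirely routine are the two continuity statements in the first paragraph, above all the continuity of $y\mapsto\phi_\rho(y)$ into $H^s(\R^N)$; once those are in place, everything reduces to the classical degree computation fed by Lemma~\ref{Tlm02}.
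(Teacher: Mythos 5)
Your proof is correct and follows the same route as the paper: define the map $y\mapsto\tau(h(\phi_\rho(y)))$ on $\overline{B_{R_0}(0)}$, observe via Lemma~\ref{Tlm02}(i) that $h$ acts as the identity on $\phi_\rho(y)$ when $|y|=R_0$, use Lemma~\ref{Tlm02}(ii) to rule out zeros on the boundary along the straight-line homotopy to the identity, and conclude by Brouwer degree. The only difference is cosmetic: you spell out the well-definedness and continuity of the map more carefully than the paper does, which is a harmless (and in fact welcome) amplification.
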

\begin{proof}
We are going to show that, for every $A\in \Gamma$, there exists $u\in A$ such that $\tau(u) = 0$. Equivalently, we prove that: for every $h\in \mathcal{H}$, there exists $\tilde{y}\in \R^N$ with $|\tilde{y}|\leq R_0$ such that
\begin{equation}\label{T39}
(\tau \circ h \circ \phi_\rho)(y) = 0.
\end{equation}
For any $h\in \mathcal{H}$, we can define the function
$$
\mathcal{J}:=\tau \circ h \circ \phi_\rho: \R^N \to \R^N
$$
and $\mathcal{F}:[0,1]\times \overline{B}_{R_0}(0)\rightarrow \R^N$ given by
$$
\mathcal{F} (t, y) := t\mathcal{J}(y) + (1-t)y.
$$
We claim that $0\not \in \mathcal{F}(t, \partial B(0, R_0))$. Indeed, for $|y| = R_0$, by Lemma \ref{Tlm02} - Part $(i)$ we have
$$
\|\phi_\rho (y)\|^2 < \frac{c_0+M}{2}.
$$
Hence, it follows that
$$
\mathcal{F}(t,y)=t(\tau \circ \phi_\rho)(y) + (1-t)y,
$$
and
\begin{equation}\label{T40}
\langle \mathcal{F}(t,y), y\rangle = t \langle \tau(\phi_\rho (y)), y\rangle + (1-t)\langle y,y\rangle.
\end{equation}
Now
\begin{enumerate}
\item[$\circ$] If $t=0$, then $\langle \mathcal{F}(0,y), y\rangle = |y|^2 = R_{0}^{2}>0$;
\item[$\circ$] If $t=1$, then by Lemma \ref{Tlm02} - Part $(ii)$ we have $\langle \mathcal{F}(1,y), y\rangle = \langle \tau(\phi_\rho(y)), y \rangle >0$;
\item[$\circ$] If $t\in (0,1)$, then $\langle \mathcal{F}(t,y), y\rangle >0$, since the terms $t, 1-t, \langle \tau(\phi_\rho (y)), y\rangle $ and $|y|^2$ are positives.
\end{enumerate}
Then, by using the invariance under homotopy of the Brouwer degree, one has
$$
d(\mathcal{F}(t, \cdot), B(0, R_0), 0) = \mbox{constant},\;\;\forall t\in [0,1].
$$
Since
$$
d(\mathcal{J}, B(0, R_0), 0) = 1\neq 0,
$$
there exists $\tilde{y}\in B(0, R_0)$ such that $\mathcal{J}(\tilde{y}) = 0$, that is
$$
\mathcal{J}(\tilde{y}) = (\tau \circ h \circ \phi_\rho)(\tilde{y}) = 0.
$$
This completes the proof.
\end{proof}

\noindent
Now, let us denote
\begin{equation}\label{T41}
c := \inf_{A\in \Gamma} \sup_{u\in A} \|u\|^2,
\end{equation}
$$
\mathcal{K}_c := \{u\in \mathcal{P}\cap \mathcal{V}:\; J(u) = \|u\|^2 = c\;\;\mbox{and}\;\;\nabla J|_{V}(u) = 0\},
$$
and set
$$
L_\gamma := \{u\in \mathcal{V}:\;J(u)\leq \gamma\},
$$
for every $\gamma \in \R$.\par
\smallskip
\noindent {\bf Proof of Theorem \ref{teo 1}}
\begin{proof}
We choose $\rho = \tilde{\rho}$ that is given in Corollary \ref{Tcor01}. We claim that $c$ given by (\ref{T41}) is a critical value, that is, $\mathcal{K}_c \neq \emptyset$. We start our analysis by noting that
\begin{equation}\label{T42}
M < c < 2^{\frac{p-2}{p}}M.
\end{equation}
In fact, by Lemma \ref{Tlm03}, for every $A\in \Gamma$, $A\cap \mathcal{T}_0 \neq \emptyset$. Then, for each $A\in \Gamma$ there is $\tilde{u}\in A \cap \mathcal{T}_0$ such that
\begin{equation}\label{T43}
\inf_{u\in \mathcal{T}_0}\|u\|^2 \leq \inf_{u\in A\cap \mathcal{T}_0} \|u\|^2 \leq \|\tilde{u}\|^2 \leq \sup_{u\in A\cap \mathcal{T}_0}\|u\|^2\leq \sup_{u\in A}\|u\|^2.
\end{equation}
Moreover, by Lemma \ref{Tlm02} and (\ref{T43}), we obtain
$$
M < c_0 = \inf_{u\in \mathcal{T}_0}\|u\|^2 \leq \sup_{u\in A}\|u\|^2, \quad \forall A\in \Gamma.
$$
Thus
\begin{equation}\label{T44}
M < c_0\leq \inf_{A\in \Gamma} \sup_{u\in A}\|u\|^2 = c.
\end{equation}
Owing to
\begin{equation}\label{T45}
c\leq \sup_{u\in A}\|u\|^2, \;\;\forall A\in \Gamma,
\end{equation}
it follows that
$$
c\leq \sup_{\phi_\rho (y) \in \Sigma} \|h(\phi_\rho (y))\|^2,\;\;\forall h\in \mathcal{H}.
$$
Now taking $h \equiv I$, we find
$$
c\leq \sup_{\phi_\rho (y)\in \Sigma}\|\phi_\rho (y)\|^2.
$$
Thus,
$$
c\leq \sup_{|y|\leq R_0} \|\phi_\rho (y)\|^2 \leq \sup_{y\in \R^N}\|\phi_\rho (y)\|^2,
$$
and by Corollary \ref{Tcor01},
$$
c\leq \sup_{y\in \R^N}\|\phi_\rho(y)\|^2 < 2^{\frac{p-2}{2}}M.
$$
The last inequality, in addition to (\ref{T44}), yields
\begin{equation}\label{T46}
M < c < 2^{\frac{p-2}{2}}M.
\end{equation}
Next, by Corollary \ref{Ecor03}, the functional $J$ satisfies the Palais-Smale condition at the level $c$ in the following set
$$
Z := \mathcal{P}\cap \mathcal{V} \cap \{u\in X_0^s:\;M < J(u) < 2^{\frac{p-2}{2}}M\}.
$$
\indent
Suppose by contradiction that $\mathcal{K}_c = \emptyset$. The following inequality
$$
\frac{c_0+M_\lambda}{2} \leq \frac{c+M}{2}< \frac{c+c}{2} = c < 2^{\frac{p-2}{p}}M,
$$
in addition to the Deformation Lemma guarantees the existence of a continuous map
$$
\eta:[0,1]\times \mathcal{V}\cap \mathcal{P}\to \mathcal{V}\cap \mathcal{P}
$$
and a positive number $\varepsilon_0$ such that
\begin{itemize}
\item[$(a)$] $L_{c+\varepsilon_0}\setminus L_{c-\varepsilon_0} \subset\subset L_{2^{\frac{p-2}{p}}M} \setminus L_{\frac{c_0+M}{2}}$,
\item[$(b)$] $\eta (t,u) = u,\;\;\forall u\in L_{c-\varepsilon_0} \cup \{\mathcal{V}\cap \mathcal{P} \setminus L_{c+\varepsilon_0}\}\;\;\mbox{and}\;\;\forall t\in [0,1]$,
\item[$(c)$] $\eta (1, L_{c + \frac{\varepsilon_0}{2}}) \subset L_{c - \frac{\varepsilon_0}{2}}$.
\end{itemize}

Fix $\tilde{A}\in \Gamma$ such that
$$
c\leq \sup_{u\in \tilde{A}} J(u) < c + \frac{\varepsilon_0}{2}.
$$
Since
$$
J(u) < c + \frac{\varepsilon_0}{2},\;\;\forall u\in \tilde{A},
$$
it follows that
$$
\tilde{A} \subset L_{c+\frac{\varepsilon_0}{2}}.
$$
Now, by using again the Deformation Lemma, one has
$$
J(u) < c-\frac{\varepsilon_0}{2},\;\;\forall u\in \eta (1, \tilde{A}),
$$
that is,
\begin{equation}\label{T47}
\sup_{u\in \eta(1, \tilde{A})}J(u) < c-\frac{\varepsilon_0}{2}.
\end{equation}
On the other hand, we notice that $\eta (1, \cdot) \in C(\mathcal{V}\cap \mathcal{P}, \mathcal{V}\cap \mathcal{P})$. Moreover, since $\tilde{A} \in \Gamma$, there exists $h\in \mathcal{H}$ such that $\tilde{A} = h(\Sigma)$. Consequently,
$$
\tilde{h} = \eta(1, \cdot) \circ h\in C(\mathcal{V}\cap \mathcal{P}, \mathcal{V}\cap \mathcal{P}).
$$
Since $h\in \mathcal{H}$, it follows that
$$
h(u) = u,\;\;\forall u\;\;\mbox{such that}\;\;\|u\|^2 < \frac{c_0 + M}{2} < c-\varepsilon_0,
$$
and
$$
\tilde{h}(u) = \eta (1,u)\;\;\forall u\;\;\mbox{such that}\;\;\|u\|^2 < \frac{c_0+M}{2}.
$$
Taking into account that
$$
\frac{c_0+M_\lambda}{2} < c-\epsilon_0,
$$
by item $(b)$, we easily have
$$
\tilde{h}(u) = \eta(1,u) = u,\;\;\forall u \;\;\mbox{such that}\;\;\|u\|^2 < \frac{c_0+M}{2} < c - \epsilon_0.
$$
Then $\tilde{h} \in \mathcal{H}$. Moreover
$$
\eta(1,\tilde{A}) \in \Gamma,
$$
owing to $\eta(1, \tilde{A}) = \tilde{h}(\Sigma)$.
Therefore, exploiting the definition of $c$, we have
$$
c\leq \sup_{u\in \eta (1, \tilde{A})} J(u),
$$
which contradicts (\ref{T47}). Thereby, $\mathcal{K}_c \neq \emptyset$ and $c$ is a critical value of functional $J$ on  $\mathcal{V} \cap \mathcal{P}$, namely there is at least one nonnegative solution of ($P$).
\end{proof}

\end{document}